\pgfplotsset{compat=1.18}
\newtheorem{theorem}{Theorem}[section]
\newtheorem{definition}[theorem]{Definition}
\newtheorem{lemma}[theorem]{Lemma}
\newtheorem{corollary}[theorem]{Corollary}
\newtheorem{remark}[theorem]{Remark}
\newtheorem{example}[theorem]{Example}
\numberwithin{equation}{section}
\title{Common Fixed Points of Cq-Commuting Maps via Generalized Gregus-Type Inequalities}
\author{
Babu G.V.R\textsuperscript{*},  Alemayehu G. Negashs\textsuperscript{\dag}, Meaza F. Bogale \textsuperscript{\ddag}\\
\small \textsuperscript{*}Department of Mathematics, Andhra University, India\\
\small \textsuperscript{*}Email: \texttt{gvr\_babu@hotmail.com} \\
\small \textsuperscript{\dag,\ddag}Department of Mathematics, Hampton University, USA\\
\small \textsuperscript{\dag}Email: \texttt{alemayehu.negash@hamptonu.edu}\\
\small \textsuperscript{\ddag}Email: \texttt{meaza.bogale@hamptonu.edu}
}
\date{}
\begin{document}

\maketitle

\begin{abstract}
We establish the existence of common fixed points for $C_q$-commuting self-mappings satisfying a generalized Gregus-type inequality with quadratic terms in $q$-starshaped subsets of normed linear spaces. Our framework extends classical fixed point theory through:
\begin{enumerate}[label=(\roman*)]
    \item Set-distance constraints $\delta(\cdot, [q, \cdot])$ generalizing norm conditions
    \item Compatibility via $C_q$-commutativity without full affinity requirements
    \item Reciprocal continuity replacing full map continuity.
\end{enumerate} 
Explicit examples (e.g., Example 2.6) demonstrate the non-triviality of these extensions. As applications, we derive invariant approximation theorems for best approximation sets. Our results generalize Nashine's work \cite{Nashine2007} and unify several known fixed point theorems.
\end{abstract}
\textbf{Keywords:} $C_q$-Commuting maps; $q$-affine maps; Gregus-type inequality; common fixed points; reciprocally continuous maps; invariant approximation.

\textbf{AMS(2000) Mathematics Subject Classification:} 47H10, 54H25.

\section{Introduction}
In 1980, Greg\'{u}s established the following existence theorem of fixed points for nonexpansive mappings in a Banach space.

\begin{theorem}[Greg\'{u}s, 1980]
Let $E$ be a closed convex subset of a Banach space $X$ and $T:X\to X$ a mapping that satisfies
\[
\|Tx-Ty\|\leq a\|x-y\|+b\|Tx-x\|+c\|Ty-y\|
\]
for all $x,y\in E$, $0<a<1$, $b\geq 0$, $c\geq 0$ and $a+b+c=1$. Then $T$ has a unique fixed point in $E$.
\end{theorem}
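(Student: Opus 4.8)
The plan is to separate the essentially trivial uniqueness from the genuinely delicate existence. For uniqueness, suppose $p,p'\in E$ are both fixed points. Applying the hypothesis with $x=p$, $y=p'$ gives
\[
\|p-p'\|=\|Tp-Tp'\|\le a\|p-p'\|+b\|Tp-p\|+c\|Tp'-p'\|=a\|p-p'\|,
\]
and since $0<a<1$ this forces $\|p-p'\|=0$. Hence at most one fixed point can exist, and the whole difficulty lies in existence.

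Next I would diagnose why the problem is hard by recording what naive iteration yields. Fix $x_0\in E$ and (as the iteration requires, so assume $T(E)\subseteq E$) set $x_{n+1}=Tx_n$, $d_n=\|x_{n+1}-x_n\|$. Applying the hypothesis to $(x_n,x_{n-1})$ gives $d_n\le a d_{n-1}+b d_n+c d_{n-1}$, so $(1-b)d_n\le(a+c)d_{n-1}$; but $a+b+c=1$ means $1-b=a+c$, whence merely $d_n\le d_{n-1}$. This is exactly the borderline of Reich's condition: in the strict case $a+b+c<1$ the ratio $(a+c)/(1-b)<1$ forces $d_n\to0$ geometrically and $\{x_n\}$ Cauchy, but here the ratio is \emph{exactly} $1$ and no purely metric argument closes the gap. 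The extra geometric input that must replace the missing contraction is the convexity of $E$.

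For existence I would therefore pass to the Krasnoselskii--Mann averaged iteration $x_{n+1}=(1-\lambda)x_n+\lambda Tx_n$ with a fixed $\lambda\in(0,1)$, which stays in $E$ by convexity. The payoff is that convexity now manufactures a genuine contraction factor along the diagonal: from $x_m-x_n=(1-\lambda)(x_{m-1}-x_{n-1})+\lambda(Tx_{m-1}-Tx_{n-1})$ and the hypothesis one obtains
\[
\|x_m-x_n\|\le\rho\,\|x_{m-1}-x_{n-1}\|+\lambda b\,\|Tx_{m-1}-x_{m-1}\|+\lambda c\,\|Tx_{n-1}-x_{n-1}\|,
\]
with $\rho=1-\lambda(1-a)\in(0,1)$. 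Thus the degenerate ratio $1$ of the Picard step is replaced by $\rho<1$, at the cost of two displacement remainders $\|Tx_k-x_k\|$.

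Everything then reduces to the single analytic fact that the displacements $\varepsilon_n:=\|Tx_n-x_n\|$ tend to $0$ (asymptotic regularity). Granting this, I would iterate the diagonal estimate: the factor $\rho<1$ geometrically damps the initial discrepancy, while the remainder terms, being tails of a null sequence weighted by a convergent geometric series in $\rho$, are made arbitrarily small; so $\{x_n\}$ is Cauchy, its limit $z$ lies in $E$ by closedness, and passing to the limit in $\|Tz-Tx_n\|\le a\|z-x_n\|+b\|Tz-z\|+c\|Tx_n-x_n\|$ yields $\|Tz-z\|\le b\|Tz-z\|$, i.e.\ $Tz=z$ since $b<1$. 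The main obstacle, and the step I expect to absorb the real work, is establishing $\varepsilon_n\to0$ (together with the orbit boundedness needed to run the diagonal iteration): at this level the crude triangle-inequality bounds again collapse to ratio $1$ under $a+b+c=1$, so one must exploit the convex-combination geometry of the segments $[x_n,Tx_n]$ directly rather than the metric inequality alone.
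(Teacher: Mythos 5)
Your uniqueness argument is correct, your diagnosis of the borderline ratio $(a+c)/(1-b)=1$ for Picard iterates is exactly right, and your closing limit computation (together with the observation that one must assume $T(E)\subseteq E$, which the statement's ``$T:X\to X$'' obscures) is fine. But there is a genuine gap, and you have located it yourself: everything is conditional on asymptotic regularity $\varepsilon_n\to 0$ and boundedness of the averaged orbit, and that deferred step is not a remainder --- it is the entire content of Greg\'{u}s's theorem. (The paper gives no proof to compare against: Theorem 1.1 is quoted as classical background with a citation to Greg\'{u}s, so your attempt must stand on its own.) Worse, the Krasnoselskii--Mann route you chose points away from a resolution: the classical asymptotic-regularity theorems for averaged iterates (Krasnosel'skii, Ishikawa) require $T$ nonexpansive, and a Greg\'{u}s map need be neither nonexpansive nor even continuous, while quasi-nonexpansiveness presupposes the fixed point you are trying to construct. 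Indeed, running the natural one-step estimate on your own iteration, from $Tx_{n+1}-x_{n+1}=(1-\lambda)(Tx_{n+1}-x_n)+\lambda(Tx_{n+1}-Tx_n)$, the hypothesis and $\|x_{n+1}-x_n\|=\lambda\varepsilon_n$ give
\[
\varepsilon_{n+1}\leq\frac{1-\lambda(1-a)+c}{a+c}\,\varepsilon_n,
\]
and the ratio exceeds $1$ by $(1-a)(1-\lambda)/(a+c)>0$ for every $\lambda\in(0,1)$: averaging makes the crude displacement bound strictly worse than the Picard bound $d_n\leq d_{n-1}$, so along this route not even monotonicity of $\varepsilon_n$ survives, let alone convergence to $0$.

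The missing idea --- in Greg\'{u}s's original proof and in the Fisher--Sessa simplification cited in the paper's bibliography --- is to exploit convexity through a different auxiliary point rather than by averaging a single orbit. First symmetrize: applying the hypothesis to $(x,y)$ and to $(y,x)$ and averaging reduces to the case $b=c=\frac{1-a}{2}$ (still summing to $1$ with $a$). Then, for an arbitrary $x\in E$, one estimates $\|T^2x-Tx\|\leq\|Tx-x\|$, $\|T^3x-T^2x\|\leq\|Tx-x\|$ and, crucially, $\|T^3x-Tx\|$, and shows that the midpoint $z=\frac{1}{2}\left(Tx+T^2x\right)$, which lies in $E$ by convexity, satisfies $\|Tz-z\|\leq k\,\|Tx-x\|$ for a constant $k=k(a)<1$. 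Iterating the assignment $x\mapsto z$ produces a sequence whose displacements decay geometrically; this simultaneously supplies the asymptotic regularity you postponed and the boundedness you flagged as needed, after which a Cauchy argument of exactly the shape you sketched, closedness of $E$, and your final passage to the limit complete the proof. So the outer layers of your outline are salvageable, but the step you set aside as ``the real work'' is precisely where the convex-combination geometry must be used in this sharper, non-orbital way, and without it the proposal does not prove the theorem.
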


This result has many generalizations such as in Babu \textit{et al.} \cite{BabuPrasad2006}, \'{C}iri\'{c} \textit{et al.} \cite{CiricUme2003}, Fisher \textit{et al.} \cite{FisherSessa1986}, Murthy \textit{et al.} \cite{MurthyEtAl}, Pathak \textit{et al.} \cite{PathakEtAl1995,PathakFisher1999}.

\begin{definition}[Jungck \cite{Jungck1986}]
Let $E$ be a non-empty subset of a metric space $(X,d)$ and let $A$ and $T$ be two selfmaps on $E$. The pair $(A,T)$ is said to be:
\begin{enumerate}
    \item \textit{compatible} if $\lim\limits_{n\to\infty}d(ATx_{n},TAx_{n})=0$, whenever $\{x_{n}\}$ is a sequence in $X$ such that $\lim\limits_{n\to\infty}Ax_{n}=\lim\limits_{n\to\infty}Tx_{n}=t$, for some $t$ in $X$.
    
    \item \textit{reciprocally continuous} \cite{Pant1998} if $\lim\limits_{n\to\infty}ATx_{n}=At$ and $\lim\limits_{n\to\infty}TAx_{n}=Tt$ whenever $\{x_{n}\}$ is a sequence in $X$ such that $\lim\limits_{n\to\infty}Ax_{n}=\lim\limits_{n\to\infty}Tx_{n}=t$, for some $t$ in $X$.
\end{enumerate}
\end{definition}

Evidently any two continuous functions are reciprocally continuous, but its converse need not be true \cite{Pant1998}.

\begin{theorem}[Tas, Telci and Fisher \cite{TasEtAl1996}]
Let $A$, $B$, $S$ and $T$ be selfmaps of a complete metric space $(X,d)$ satisfying:
\begin{enumerate}
    \item $A(X)\subseteq T(X)$ and $B(X)\subseteq S(X)$;
    \item $[d(Ax,By)]^{2}\leq c_{1}$ max $\{[d(Sx,Ax)]^{2},\ [d(Ty,By)]^{2},\ [d(Sx,Ty)]^{2}\}$
    
    $\quad\quad\quad\quad\quad+c_{2}$ max $\{d(Sx,Ax)\ d(Sx,By),\ d(Ty,Ax)\ d(Ty,By)\}$
    
    $\quad\quad\quad\quad\quad+c_{3}$ $d(Sx,By)$ $d(Ty,Ax)$
    
    for all $x,y\in X$, where $c_{1},c_{2},c_{3}\geq 0$, $c_{1}+2c_{2}<1$, $c_{1}+c_{3}<1$, and
    \item the pairs $(A,S)$ and $(B,T)$ are compatible on $X$.
\end{enumerate}
If one of the mappings $A$, $B$, $S$ and $T$ is continuous then $A$, $B$, $S$ and $T$ have a unique common fixed point in $X$.
\end{theorem}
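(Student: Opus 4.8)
The plan is to follow the classical Jungck iteration scheme adapted to four maps. First I would use the inclusions $A(X)\subseteq T(X)$ and $B(X)\subseteq S(X)$ to build, starting from an arbitrary $x_{0}\in X$, a sequence $\{x_{n}\}$ together with an auxiliary sequence $\{y_{n}\}$ defined by $y_{2n}=Ax_{2n}=Tx_{2n+1}$ and $y_{2n+1}=Bx_{2n+1}=Sx_{2n+2}$. Every estimate along the iteration is then read off from the consecutive distances $d_{n}:=d(y_{n},y_{n+1})$.

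The computational heart, which I expect to be the main obstacle, is to show that $\{y_{n}\}$ is Cauchy. For this I would substitute $x=x_{2n}$, $y=x_{2n+1}$ into inequality (2). The key point is that the coincidences $Ax_{2n}=Tx_{2n+1}=y_{2n}$ force $d(Tx_{2n+1},Ax_{2n})=0$, so the third product term and one branch of the second maximum vanish; the max-expressions collapse and, after the triangle inequality $d(y_{2n-1},y_{2n+1})\le d_{2n-1}+d_{2n}$, one is left with $d_{2n}^{2}\le c_{1}\max\{d_{2n-1}^{2},d_{2n}^{2}\}+c_{2}\,d_{2n-1}(d_{2n-1}+d_{2n})$. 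A short case analysis rules out $d_{2n}\ge d_{2n-1}$: in that regime the bound becomes $d_{2n}^{2}\le(c_{1}+2c_{2})\,d_{2n}^{2}$, impossible for $d_{2n}>0$ since $c_{1}+2c_{2}<1$. Hence $d_{2n}<d_{2n-1}$, and writing $r=d_{2n}/d_{2n-1}$ reduces the estimate to $r^{2}-c_{2}r-(c_{1}+c_{2})\le0$. Evaluating this quadratic at $r=1$ gives $1-c_{1}-2c_{2}>0$, so its positive root is strictly below $1$; this yields a fixed factor $k<1$ with $d_{2n}\le k\,d_{2n-1}$. The symmetric substitution $x=x_{2n+2}$, $y=x_{2n+1}$ gives $d_{2n+1}\le k\,d_{2n}$, whence $d_{n}\le k\,d_{n-1}$ for all $n$ and $\{y_{n}\}$ is Cauchy.

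By completeness $y_{n}\to z$ for some $z\in X$, so the four subsequences $Ax_{2n}$, $Tx_{2n+1}$, $Bx_{2n+1}$, $Sx_{2n+2}$ all converge to $z$. To promote $z$ to a common fixed point I would combine the continuity hypothesis with compatibility; suppose, say, that $S$ is continuous. Then $SSx_{2n}\to Sz$ and $SAx_{2n}\to Sz$, while compatibility of $(A,S)$ gives $ASx_{2n}\to Sz$ as well. Substituting $x=Sx_{2n}$, $y=x_{2n+1}$ and passing to the limit collapses the right-hand side to $(c_{1}+c_{3})[d(Sz,z)]^{2}$, and $c_{1}+c_{3}<1$ forces $Sz=z$. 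Next, $x=z$, $y=x_{2n+1}$ yields $[d(Az,z)]^{2}\le c_{1}[d(Az,z)]^{2}$, so $Az=z$. Using $A(X)\subseteq T(X)$ I choose $w$ with $Tw=Az=z$ and substitute $x=z$, $y=w$ to obtain $Bw=z=Tw$; compatibility of $(B,T)$ at this coincidence point gives $Bz=Tz$, after which $x=y=z$ in (2) produces $[d(z,Bz)]^{2}\le(c_{1}+c_{3})[d(z,Bz)]^{2}$ and hence $Bz=Tz=z$. Thus $Az=Bz=Sz=Tz=z$. The remaining cases (continuity of $A$, $B$, or $T$) follow by the symmetry interchanging the pairs $(A,S)$ and $(B,T)$.

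Finally, uniqueness is immediate: if $z'$ is another common fixed point, substituting $x=z$, $y=z'$ into (2) makes the self-coincidence terms $d(Sz,Az)$ and $d(Tz',Bz')$ vanish, leaving $[d(z,z')]^{2}\le(c_{1}+c_{3})[d(z,z')]^{2}$, so $z=z'$ because $c_{1}+c_{3}<1$.
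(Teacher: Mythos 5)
The paper itself offers no proof of this statement: it is quoted as Theorem 1.3 directly from Tas, Telci and Fisher \cite{TasEtAl1996}, so your proposal can only be measured against the standard argument of that source, and the route you take (Jungck-type alternating iteration, geometric ratio, then continuity plus compatibility at the limit) is indeed that argument. Your computational core checks out: with $x=x_{2n}$, $y=x_{2n+1}$ the coincidence $Ax_{2n}=Tx_{2n+1}$ annihilates the $c_{3}$ term and one branch of the $c_{2}$ maximum, giving exactly $d_{2n}^{2}\le c_{1}\max\{d_{2n-1}^{2},d_{2n}^{2}\}+c_{2}d_{2n-1}(d_{2n-1}+d_{2n})$; the regime $0<d_{2n-1}\le d_{2n}$ contradicts $c_{1}+2c_{2}<1$ (the degenerate case $d_{2n}=d_{2n-1}=0$ being trivially consistent with the ratio bound), and your quadratic $r^{2}-c_{2}r-(c_{1}+c_{2})\le 0$, positive at $r=1$ because $1-c_{1}-2c_{2}>0$, does give a uniform $k<1$. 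The $S$-continuous case, the use of compatibility at coincidence points (compatible pairs commute there, via constant sequences), and the uniqueness computation are all sound; note you also tacitly use $c_{1}\le c_{1}+2c_{2}<1$ when concluding $Az=z$ from $[d(Az,z)]^{2}\le c_{1}[d(Az,z)]^{2}$, which is fine.

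The one genuine flaw is your closing sentence. The symmetry interchanging $(A,S,x)$ with $(B,T,y)$ maps the $S$-continuous case onto the $T$-continuous case, and would map an $A$-continuous argument onto a $B$-continuous one, but it does \emph{not} reduce the case where $A$ (rather than $S$) is continuous to anything you have proved: continuity of $A$ is not symmetric to continuity of $S$, and the argument is structurally different. There, continuity of $A$ gives $AAx_{2n}\to Az$ and, with compatibility of $(A,S)$, also $SAx_{2n}\to Az$; substituting $x=Ax_{2n}$, $y=x_{2n+1}$ yields $[d(Az,z)]^{2}\le(c_{1}+c_{3})[d(Az,z)]^{2}$, so $Az=z$ is obtained \emph{before} anything is known about $Sz$. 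One then chooses $w$ with $Tw=Az=z$ to get $Bw=Tw$, hence $Bz=Tz$ by compatibility of $(B,T)$; then $x=x_{2n}$, $y=z$ gives $Bz=Tz=z$; and only at the very end does one recover $Sz=z$, by choosing $v$ with $Sv=Bz=z$ (using $B(X)\subseteq S(X)$), deducing $Av=Sv=z$ from the inequality, and invoking compatibility of $(A,S)$ at this coincidence to conclude $Sz=Az=z$. So the four continuity hypotheses split into two essentially different arguments ($S$ or $T$ continuous versus $A$ or $B$ continuous), and your appeal to symmetry covers only the first pair; the rest of the proposal is correct as written.
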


\begin{definition}
Let $E$ be a non-empty subset of a metric space $(X,d)$ and let $A$ and $S$ be two selfmaps on $E$. The pair $(A,T)$ is said to be \textit{weakly compatible} if $TAx=ATx$ whenever $Ax=Tx$, $x\in X$.
\end{definition}

Obviously, every compatible pair is weakly compatible but its converse need not be true \cite{JungckRhoades1998}.

Babu and Kameswari \cite{BabuKameswari2004} generalized Theorem 1.3 by relaxing the continuity of $A$, $B$, $S$, and $T$ and replacing compatible by weakly compatible in (iii). In fact, Kameswari \cite{Kameswari2008} proved the following theorem.

\begin{theorem}[Kameswari \cite{Kameswari2008}]
Let $A$, $B$, $S$ and $T$ be selfmaps of a complete metric space $(X,d)$ satisfying (i) and (ii) of Theorem 1.3 and
\begin{enumerate}
    \item[(iii)] the pairs $(A,S)$ and $(B,T)$ are compatible on $X$.
\end{enumerate}
If either of $A(X)$ or $B(X)$ or $S(X)$ or $T(X)$ is a complete subspace of $X$, then $A$, $B$, $S$ and $T$ have a unique common fixed point in $X$.
\end{theorem}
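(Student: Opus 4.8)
The plan is to run the standard Jungck-type iteration tailored to the quadratic inequality (ii), and then extract a common fixed point from a range-completeness argument rather than from continuity. Starting from an arbitrary $x_0 \in X$, I would use the inclusions $A(X) \subseteq T(X)$ and $B(X) \subseteq S(X)$ of (i) to build points $x_n$ together with an auxiliary sequence satisfying $y_{2n} = Ax_{2n} = Tx_{2n+1}$ and $y_{2n+1} = Bx_{2n+1} = Sx_{2n+2}$. Writing $d_n = d(y_n, y_{n+1})$, the goal of this first stage is to show that $\{y_n\}$ is Cauchy.

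The technical heart of the argument, and the step I expect to be the main obstacle, is a contraction estimate $d_n \leq k\, d_{n-1}$ with $k = \sqrt{c_1 + 2c_2} < 1$. Substituting $x = x_{2n}$, $y = x_{2n+1}$ into (ii), the crucial simplification is that $d(Ty, Ax) = d(Tx_{2n+1}, Ax_{2n}) = d(y_{2n}, y_{2n}) = 0$, which annihilates the $c_3$ term and one branch of the $c_2$ maximum, leaving $d_{2n}^2 \leq c_1 \max\{d_{2n-1}^2, d_{2n}^2\} + c_2\, d_{2n-1}\, d(y_{2n-1}, y_{2n+1})$. Bounding $d(y_{2n-1}, y_{2n+1}) \leq d_{2n-1} + d_{2n}$ by the triangle inequality and running a short case analysis, the possibility $d_{2n} \geq d_{2n-1}$ is excluded because it would force $1 \leq c_1 + 2c_2$, and the remaining case gives $d_{2n}^2 \leq (c_1 + 2c_2)\, d_{2n-1}^2$. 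Interchanging the roles of $x_{2n+2}$ and $x_{2n+1}$ treats the odd index symmetrically, so $d_n \leq k^n d_0$ and $\{y_n\}$ is Cauchy. The delicacy lies in tracking precisely which distances vanish and in invoking the hypothesis $c_1 + 2c_2 < 1$ at exactly the right place.

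For the second stage I would invoke completeness. Assuming, say, $S(X)$ is complete, the Cauchy subsequence $\{y_{2n+1}\} = \{Sx_{2n+2}\} \subseteq S(X)$ converges to some $z \in S(X)$, and since $\{y_n\}$ is Cauchy the full sequence converges to $z$; the other three cases are identical after relabeling. Choosing $p$ with $Sp = z$ and substituting $x = p$, $y = x_{2n+1}$ into (ii), every term on the right except $c_1 [d(Ap, z)]^2$ tends to zero, so $[d(Ap, z)]^2 \leq c_1 [d(Ap, z)]^2$ yields $Ap = z = Sp$ because $c_1 \leq c_1 + 2c_2 < 1$. Weak compatibility of $(A, S)$ then gives $Az = Sz$, and feeding $x = z$, $y = x_{2n+1}$ into (ii) collapses the right side to $(c_1 + c_3)[d(Az, z)]^2$, forcing $Az = Sz = z$ via $c_1 + c_3 < 1$. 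Since $z = Az \in A(X) \subseteq T(X)$, I would pick $w$ with $Tw = z$; the substitution $x = z$, $y = w$ gives $Bw = z = Tw$, and weak compatibility of $(B, T)$ together with the diagonal choice $x = y = z$ produces $Bz = Tz = z$. Hence $z$ is a common fixed point of all four maps.

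Uniqueness follows by taking a second common fixed point $z'$ and substituting $x = z$, $y = z'$: the vanishing self-distances $d(Sz, Az) = 0$ and $d(Tz', Bz') = 0$ eliminate the $c_2$ term and collapse the $c_1$ maximum, reducing (ii) to $[d(z, z')]^2 \leq (c_1 + c_3)[d(z, z')]^2$, whence $z = z'$. I would finally remark that only weak compatibility of the two pairs is actually used, so the compatibility hypothesis stated in (iii) (which implies weak compatibility) is more than sufficient; this is precisely the trade-off that allows range-completeness to replace the continuity requirement present in Theorem 1.3.
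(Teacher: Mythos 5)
The paper never proves this statement: it is imported verbatim as Theorem 1.5 from Kameswari's thesis \cite{Kameswari2008} (in the line of Babu and Kameswari \cite{BabuKameswari2004}) and used as a black box in the proof of Theorem 2.1, so there is no in-paper argument to compare yours against. Judged on its own, your reconstruction is correct and is precisely the standard Jungck-type argument the cited sources rely on: the alternating iteration $y_{2n}=Ax_{2n}=Tx_{2n+1}$, $y_{2n+1}=Bx_{2n+1}=Sx_{2n+2}$ makes $d(Ty,Ax)$ vanish at even steps and $d(Sx,By)$ at odd steps, which kills the $c_3$ term and one branch of the $c_2$ maximum exactly as you say, and your case analysis yielding $d_n\le\sqrt{c_1+2c_2}\,d_{n-1}$ is sound; the only cosmetic slip is that the case $d_{2n}\ge d_{2n-1}$ forces $1\le c_1+2c_2$ only when $d_{2n}>0$, while for $d_{2n}=0$ the bound holds trivially. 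The completeness stage also checks out: each limit collapse you claim is verified by direct substitution (for instance, with $x=z$, $y=x_{2n+1}$ the right side does tend to $(c_1+c_3)\,[d(Az,z)]^2$ because the $c_2$ branch carries the factor $d_{2n}\to 0$), the four range cases do reduce to two via $A(X)\subseteq T(X)$ and $B(X)\subseteq S(X)$, and the uniqueness computation is correct. Your closing observation that only weak compatibility of the pairs is actually used is apt and consistent with the paper's own narrative, which credits Babu and Kameswari with replacing ``compatible'' by ``weakly compatible''; the word ``compatible'' in item (iii) of the statement is harmless in any case, since compatibility at a coincidence point implies the weak compatibility your argument invokes.
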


\begin{definition}[1.6]
Let $X$ be a normed linear space. A subset $E$ of $X$ is said to be:
\begin{enumerate}
    \item \textit{convex} if $\lambda x+(1-\lambda)y\in E$, whenever $x,y\in E$ and $0\leq\lambda\leq 1$;
    
    \item \textit{$q$-starshaped} if there exists at least one point $q\in E$ such that the line segment $[x,q]$ joining $x$ to $q$ is contained in $E$ for all $x,y\in E$ (that is, $\lambda x+(1-\lambda)q\in E$, whenever $x,y\in E$ and $0<\lambda<1$). In this case, $q$ is called the star-center of $E$.
\end{enumerate}
\end{definition}

Every convex set is starshaped with respect to each of its points, but its converse need not be true \cite{BabuAlemayehu}.
\begin{example}
    
 Example of a $q$-Starshaped Set that is Not Convex\\
Consider $\mathbb{R}^2$ with the Euclidean norm. Define the set:
\[
E = \overline{B}((0,0), 1) \cup \overline{B}((2,0), 1)
\]
where $\overline{B}((a,b), r)$ denotes the closed disk of radius $r$ centered at $(a,b)$. Explicitly:
\[
E = \left\{(x,y) : x^2 + y^2 \leq 1\right\} \cup \left\{(x,y) : (x-2)^2 + y^2 \leq 1\right\}.
\]
The \textbf{star-center} is $q = (1,0)$.

\paragraph{Verification of Properties}

\paragraph{(1) $q$-Starshapedness}
For any $p \in E$, the line segment $[q, p]$ lies in $E$:
\begin{itemize}
    \item \textbf{Case 1:} $p \in \overline{B}((0,0), 1)$ \\
    Parametrize $\gamma(t) = (1-t)(1,0) + t(x_p, y_p) = (1 + t(x_p-1), ty_p)$ for $t \in [0,1]$. Then:
    \[
    \|\gamma(t) - (0,0)\|^2 = [1 + t(x_p-1)]^2 + (ty_p)^2 \leq 1
    \]
    since $x_p^2 + y_p^2 \leq 1$ and the function is convex.

    \item \textbf{Case 2:} $p \in \overline{B}((2,0), 1)$ \\
    Parametrize $\gamma(t) = (1 + t(x_p-1), ty_p)$ for $t \in [0,1]$. Then:
    \[
    \|\gamma(t) - (2,0)\|^2 = [t(x_p-1) -1]^2 + (ty_p)^2 \leq 1
    \]
    since $(x_p-2)^2 + y_p^2 \leq 1$ and the function is convex.
\end{itemize}
Thus $E$ is $q$-starshaped with $q = (1,0)$.

\paragraph{(2) Non-Convexity}
Take $a = (0,1) \in E$ and $b = (2,1) \in E$. Their midpoint is:
\[
c = \frac{a + b}{2} = (1,1)
\]
But $(1,1) \notin E$ because:
\[
\|(1,1) - (0,0)\| = \sqrt{2} > 1 \quad \text{and} \quad \|(1,1) - (2,0)\| = \sqrt{2} > 1
\]

\paragraph{Geometric Illustration}
\begin{center}
\begin{tikzpicture}[scale=1.2]
    \fill[gray!20] (0,0) circle (1);
    \fill[gray!20] (2,0) circle (1);
    \draw[thick] (0,0) circle (1);
    \draw[thick] (2,0) circle (1);
    
    \fill[red] (1,0) circle (2pt) node[below] {$q = (1,0)$};
    
    \fill (0,1) circle (2pt) node[left] {$(0,1)$};
    \fill (2,1) circle (2pt) node[right] {$(2,1)$};
    
    \fill[red] (1,1) circle (2pt) node[above] {$(1,1) \notin E$};
    
    \draw[dashed, red] (0,1) -- (2,1);
    
    \draw[blue, thick] (1,0) -- (0,1);
    \draw[blue, thick] (1,0) -- (2,1);
    \draw[blue, thick] (1,0) -- (-0.6,0.8);
    \draw[blue, thick] (1,0) -- (2.6,0.8);
    \draw[blue, thick] (1,0) -- (0,-0.8);
    \draw[blue, thick] (1,0) -- (2,-0.8);
    
    \draw[->] (-1,0) -- (3.2,0) node[right] {$x$};
    \draw[->] (0,-1.2) -- (0,1.5) node[above] {$y$};
    
    \node at (0,-1.5) {$\overline{B}((0,0),1)$};
    \node at (2,-1.5) {$\overline{B}((2,0),1)$};
\end{tikzpicture}
\end{center}
\end{example}

\paragraph{Key Observations}
\begin{itemize}
    \item \textbf{$q$-Starshapedness}: All segments from $q$ to points in $E$ lie entirely in $E$
    \item \textbf{Non-Convexity}: The segment between $(0,1)$ and $(2,1)$ contains points outside $E$
\end{itemize}

\begin{definition}[1.7]
Let $E$ be a convex subset of normed linear space $X$. A selfmap $T:E\to E$ is said to be:
\begin{enumerate}
    \item \textit{affine} if
    \[
    T(\lambda x+(1-\lambda)y)=\lambda Tx+(1-\lambda)Ty
    \]
    for all $x,y\in E$ and $\lambda\in(0,1)$;
    
    \item \textit{affine with respect to a point $q$} \cite{VijayarajuMaridai2004} if
    \[
    T(\lambda x+(1-\lambda)q)=\lambda Tx+(1-\lambda)Tq
    \]
    for all $x\in E$ and $\lambda\in(0,1)$;
    
    \item \textit{$q$-affine} if $E$ is $q$-starshaped and
    \[
    T(\lambda x+(1-\lambda)q)=\lambda Tx+(1-\lambda)q
    \]
    for all $x\in E$ and $\lambda\in(0,1)$.
\end{enumerate}
\end{definition}

Here we observe that if $T$ is $q$-affine then $Tq=q$.

\begin{remark}[1.8]
\begin{enumerate}
    \item Every affine mapping is affine with respect to a point but its converse need not be true \cite{VijayarajuMaridai2004,Nashine2007};
    
    \item Every affine map $T$ is $q$-affine if $Tq=q$ but its converse need not be true even if $Tq=q$ \cite{BabuAlemayehu};
    
    \item Every $q$-affine map $T$ is affine with respect to a point $q$, but its converse need not be true as shown by the following example.
\end{enumerate}
\end{remark}

\begin{example}[1.9]
Let $X=\mathbb{R}$ with the usual norm and let $E=[0,1]$. We define a selfmap $T$ on $E$ by
\[
Tx=\begin{cases}
1 & \text{if } x\in[0,1) \\
0 & \text{if } x=1.
\end{cases}
\]
Then we have
\[
T\left(\lambda x+(1-\lambda)\frac{1}{2}\right)=\begin{cases}
1 & \text{if } x\in[0,1) \text{ and } \lambda\in(0,1) \\
0 & \text{if } x=1 \text{ and } \lambda=1.
\end{cases}
\]

If $x\in[0,1)$ and $\lambda\in(0,1)$, then $T(x)=1=T\left(\frac{1}{2}\right)$ and therefore
\[
T\left(\lambda x+(1-\lambda)\frac{1}{2}\right)=1=\lambda Tx+(1-\lambda)T\left(\frac{1}{2}\right).
\]

If $x=1$ and $\lambda=1$, then
\[
T\left(\lambda x+(1-\lambda)\frac{1}{2}\right)=0=\lambda Tx+(1-\lambda)T\left(\frac{1}{2}\right).
\]

Therefore, $T$ is affine with respect to $\frac{1}{2}$.

But $T\left(\frac{1}{2}\right)=1$, so $T$ is not $q$-affine for $q=1/2$ because $Tq \neq q$.

\end{example}

\begin{definition}[1.10, \cite{Al-Thagafi2006}]
Let $E$ be $q$-starshaped subset of a normed linear space $X$. Let $A,T:E\to E$. A pair $(A,T)$ is called $C_{q}$-commuting if $ATx=TAx$ for all $x\in C_{q}(A,T)$, where $C_{q}(A,T):=\bigcup\{C(A,T_{k}):k\in[0,1]\}$ where $C(A,T_{k})=\{x\in E:Ax=T_{k}x\}$ and $T_{k}x=kTx+(1-k)q$.
\end{definition}

We note that each pair of $C_{q}$-commuting mappings is weakly compatible but its converse need not be true \cite{BabuAlemayehu}.


\begin{example}
    
Consider the normed linear space $X = \mathbb{R}$ with the absolute-value norm and the $q$-starshaped set $E = [0, 1]$ with \textbf{star-center} $\boldsymbol{q = 0}$. Define selfmaps $A, T: E \to E$ as:
\[
A(x) = 
\begin{cases} 
0 & \text{if } 0 \leq x \leq 0.5, \\
1 & \text{if } 0.5 < x \leq 1,
\end{cases}
\quad \text{and} \quad
T(x) = x^2.
\]

\paragraph{Step 1:}
For $k \in [0, 1]$, define 
\(
T_k(x) = k T(x) + (1-k)q = kx^2.
\)
The coincidence set $C(A, T_k) = \{x \in E : A(x) = T_k(x)\}$ is:
\begin{itemize}
    \item \textbf{Case $k = 0$}: $T_0(x) = 0$. Then $A(x) = 0$ $\forall x \in [0, 0.5]$, so $C(A, T_0) = [0, 0.5]$.
    \item \textbf{Case $k \in (0, 1)$}: 
    \begin{itemize}
        \item For $x \in [0, 0.5]$, $A(x) = 0$ but $T_k(x) = kx^2 > 0$ when $x > 0$, so only $x=0$ satisfies $A(0) = 0 = T_k(0)$.
        \item For $x \in (0.5, 1]$, $A(x) = 1$ but $T_k(x) = kx^2 < 1$ (since $k < 1$ and $x^2 \leq 1$), so no solutions.
        Thus $C(A, T_k) = \{0\}$.
    \end{itemize}
    \item \textbf{Case $k = 1$}: $T_1(x) = x^2$. Solve $A(x) = x^2$:
    \begin{itemize}
        \item If $x \in [0, 0.5]$, $0 = x^2 \implies x = 0$.
        \item If $x \in (0.5, 1]$, $1 = x^2 \implies x = 1$.
        Thus $C(A, T_1) = \{0, 1\}$.
    \end{itemize}
\end{itemize}
The $C_q$-set is:
\[
C_q(A, T) = \bigcup_{k \in [0, 1]} C(A, T_k) = [0, 0.5] \cup \{1\}.
\]

\paragraph{Step 2:} 
Check $ATx = TAx$ for all $x \in C_q(A, T)$:
\begin{itemize}
    \item \textbf{Case $x \in [0, 0.5]$}: 
    $Tx = x^2 \leq 0.25 \leq 0.5$, so $A(Tx) = 0$. \\
    $A(x) = 0$, so $T(Ax) = T(0) = 0$. Thus $ATx = TAx = 0$.
    
    \item \textbf{Case $x = 1$}: 
    $T(1) = 1$, so $A(T1) = A(1) = 1$. \\
    $A(1) = 1$, so $T(A1) = T(1) = 1$. Thus $AT1 = TA1 = 1$.
\end{itemize}
Hence, $(A, T)$ is $C_q$-commuting.

\paragraph{Step 3:} Checking non-Commuting Outside $C_q(A, T)$.
Pick $x = 0.6 \notin C_q(A, T)$:
\begin{itemize}
    \item $T(0.6) = 0.6^2 = 0.36$, so $A(Tx) = A(0.36) = 0$ (since $0.36 \leq 0.5$).
    \item $A(0.6) = 1$ (since $0.6 > 0.5$), so $T(Ax) = T(1) = 1$.
    \item Thus $AT(0.6) = 0 \neq 1 = TA(0.6)$.
\end{itemize}
\end{example}

\begin{definition}[1.11]
Let $X$ be a Banach space. A selfmap $T:E\subseteq X\to X$ is said to be \textit{demiclosed} if every sequence $\{x_{n}\}$ in $E$ such that $x_{n}$ converges weakly to $x\in E$ and $Tx_{n}$ converges strongly to $y\in E$, then $y=Tx$.
\end{definition}

If $\{x_{n}\}$ converges weakly to $x$ as $n\rightarrow\infty$, we write it by $x=w-\lim_{n\rightarrow\infty}x_{n}$.

\begin{definition}[1.12]
Let $X$ be a Banach space. A selfmap $T:E\to E$ is said to be \textit{weakly continuous} if $Tx_{n}\to Tx$ weakly whenever $x_{n}\to x$ weakly.
\end{definition}

\begin{definition}[1.13]
Let $X$ be a normed linear space and let $E$ be a non-empty subset of $X$. Let $u\in X$. An element $y\in E$ is called a \textit{best approximant} to $u\in X$ if
\[
\|u-y\|=\delta(u,E)=\inf\{\|u-z\|:z\in E\}.
\]
\end{definition}

We write the set $P_{E}(u)=\{x\in E:d(x,y)=\delta(u,E)\}$ is called the \textit{set of best approximants} to $u\in X$ out of $E$.

The aim of this paper is to prove the existence of common fixed points for $C_{q}$-commuting maps satisfying 'Gregus type condition' in a $q$-starshaped domain. As an application, we give invariant approximation results. Our theorems generalize the results of Nashine \cite{Nashine2007}.

\section{Main Results}

Now we prove our main results.

\begin{theorem} \label{Thrm: 2.1}
Let $E$ be a nonempty $q$-starshaped subset of a normed linear space $X$. Let $A$, $B$, $S$ and $T$ be four selfmaps of $E$ satisfying:
\begin{enumerate}
    \item $A(X)\subseteq T(X)$ and $B(X)\subseteq S(X)$; 
    
    \item The generalized Gregus-type inequality:
    
    \begin{align}
    \|Ax-By\|^{2} &\leq c_{1} \max \{\left[\delta(Sx,[q,Ax])\right]^{2},\left[\delta(Ty,[q,By])\right]^{2},\|Sx-Ty\|^{2}\} \nonumber\\
    &+c_{2}\max\{\delta(Sx,[q,Ax]) \delta(Sx,[q,By]),\delta(Ty,[q,By]) \delta(Ty,[q,Ax])\} \label{(2.1.1)}\\
    &+c_{3} \delta(Sx,[q,By]) \delta(Ty,[q,Ax]),\nonumber
    \end{align}
    for all $x,y\in X$, where $c_{1},c_{2},c_{3}\geq 0$, $c_{1}+2c_{2}=1$, $c_{1}+c_{3}=1$;
    
    \item Either the pairs $(A,S)$ and $(B,T)$ or $(A,T)$ and $(B,S)$ are reciprocally continuous on $E$; and
    
    \item Both pairs $(A,S)$ and $(B,T)$ are $C_{q}$-commuting on $X$ and $S$ and $T$ are $q$-affine.
\end{enumerate}
If either $S(E)$ or $T(E)$ is compact, then $F(A)\cap F(B)\cap F(S)\cap F(T)\neq\emptyset$.
\end{theorem}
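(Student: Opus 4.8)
The plan is to approximate the nonexpansive-type maps $A,B$ by genuine contractions centred at $q$, solve the resulting four-map problem by the Tas--Telci--Fisher/Kameswari machinery, and then pass to the limit using compactness together with reciprocal continuity and $C_q$-commutativity. Assume without loss of generality that $S(E)$ is compact (the case of $T(E)$ is symmetric), fix $\{k_n\}\subset(0,1)$ with $k_n\to1$, and set $A_nx=k_nAx+(1-k_n)q$ and $B_nx=k_nBx+(1-k_n)q$. Since $E$ is $q$-starshaped these are selfmaps of $E$, and since $S,T$ are $q$-affine, writing $Ax=Tw$ with $w\in E$ (possible by (1)) gives $A_nx=T(k_nw+(1-k_n)q)$ with $k_nw+(1-k_n)q\in E$; hence (1) upgrades to $A_n(E)\subseteq T(E)$ and, symmetrically, $B_n(E)\subseteq S(E)$.

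The decisive geometric point is that $A_nx\in[q,Ax]$ and $B_ny\in[q,By]$, so that $\delta(Sx,[q,Ax])\le\|Sx-A_nx\|$, $\delta(Ty,[q,By])\le\|Ty-B_ny\|$, $\delta(Sx,[q,By])\le\|Sx-B_ny\|$ and $\delta(Ty,[q,Ax])\le\|Ty-A_nx\|$. Substituting these bounds into \eqref{(2.1.1)} and using $\|A_nx-B_ny\|=k_n\|Ax-By\|$ converts the generalized Gregus inequality into exactly the inequality (ii) of Theorem 1.3 for the quadruple $(A_n,B_n,S,T)$ with coefficients $k_n^2c_1,k_n^2c_2,k_n^2c_3$; here one only has to note that enlarging each factor enlarges the relevant maxima and products. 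Because $c_1+2c_2=1$ and $c_1+c_3=1$, the new coefficients obey $k_n^2c_1+2k_n^2c_2=k_n^2<1$ and $k_n^2c_1+k_n^2c_3=k_n^2<1$, so the contractive hypotheses now hold strictly. With $S(E)$ compact (hence complete) and the pairs $(A_n,S)$, $(B_n,T)$ weakly compatible---which I must derive from the $C_q$-commutativity of $(A,S)$, $(B,T)$ and the $q$-affineness of $S,T$---the (weakly compatible form of the) four-map theorem, Theorem 1.5, yields for each $n$ a unique common fixed point $x_n$ with $A_nx_n=B_nx_n=Sx_n=Tx_n=x_n$.

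Now the compactness argument. Since $x_n=Sx_n\in S(E)$, a subsequence $x_{n_j}\to z\in S(E)\subseteq E$. From $x_n=A_nx_n$ I get $Ax_n-x_n=(1-k_n)(Ax_n-q)$, and as $Ax_n=(x_n-(1-k_n)q)/k_n$ stays bounded this forces $\|Ax_n-x_n\|\to0$; hence $Ax_{n_j}\to z$, likewise $Bx_{n_j}\to z$, while $Sx_{n_j}=Tx_{n_j}=x_{n_j}\to z$. Taking the case in which $(A,S)$ and $(B,T)$ are reciprocally continuous (the other case is symmetric), reciprocal continuity applied to $Ax_{n_j}\to z$ and $Sx_{n_j}\to z$ gives $ASx_{n_j}\to Az$ and $SAx_{n_j}\to Sz$; since $Sx_{n_j}=x_{n_j}$ we have $ASx_{n_j}=Ax_{n_j}\to z$, so $Az=z$, and the same through $(B,T)$ gives $Bz=z$. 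Invoking $C_q$-commutativity at $x_n$ to write $SAx_n=ASx_n=Ax_n$ makes $SAx_{n_j}\to z$, which together with $SAx_{n_j}\to Sz$ forces $Sz=z$; an identical argument yields $Tz=z$. Thus $z\in F(A)\cap F(B)\cap F(S)\cap F(T)$.

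I expect the genuine difficulty to be the last pull-back. The perturbation places $Sx_n$ on the segment $[q,Ax_n]$ rather than $Ax_n$ on $[q,Sx_n]$, so the coincidence points $x_n$ of the perturbed system lie in the \emph{reversed} $C_q$-set; consequently, justifying $SAx_n=ASx_n$ (equivalently, the weak compatibility of $(A_n,S)$ used in Theorem 1.5) from the hypothesis that $(A,S)$ is $C_q$-commuting is the step that must be argued with care, using that $Sx_n=x_n$ is fixed and that $S$ is $q$-affine. By contrast, the contraction estimate, the application of Theorem 1.5, and the compactness passage to the limit are routine once this commutativity bookkeeping is settled.
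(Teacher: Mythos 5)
Your construction is the paper's own: the same approximants $A_nx=k_nAx+(1-k_n)q$, $B_nx=k_nBx+(1-k_n)q$, the same conversion of \eqref{(2.1.1)} into the Tas--Telci--Fisher inequality via $\delta(Sx,[q,Ax])\le\|Sx-A_nx\|$ and its three companions, with coefficients $k_n^2c_1,k_n^2c_2,k_n^2c_3$ now strictly admissible, the same appeal to Theorem 1.5 to produce $x_n$ with $A_nx_n=B_nx_n=Sx_n=Tx_n=x_n$, and the same compactness-plus-reciprocal-continuity passage to the limit. The one place you genuinely diverge is also where your writeup has an unclosed gap --- which you yourself flag but never resolve: commutativity of $S$ and $A$ (resp.\ $T$ and $B$) at the points $x_{n_j}$. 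You use it twice: once as the weak compatibility of $(A_n,S)$ and $(B_n,T)$ needed to invoke Theorem 1.5, and again in the limit, when you write $SAx_{n_j}=ASx_{n_j}=Ax_{n_j}\to z$ to conclude $Sz=z$. As you correctly observe, the coincidence relation delivered by the perturbed system is $Sx_{n_j}=A_{n_j}x_{n_j}$, i.e.\ $Sx_{n_j}\in[q,Ax_{n_j}]$, so $x_{n_j}$ lies in the \emph{reversed} set $C_q(S,A)$; under the literal Definition 1.10, the hypothesis that $(A,S)$ is $C_q$-commuting only gives commutativity on $C_q(A,S)=\{x:Ax\in[q,Sx]\}$, and indeed $Ax_{n_j}=\tfrac{1}{k_{n_j}}Sx_{n_j}+\bigl(1-\tfrac{1}{k_{n_j}}\bigr)q$ has coefficient $\tfrac{1}{k_{n_j}}>1$, so $x_{n_j}\notin C_q(A,S)$ in general. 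Announcing that this ``must be argued with care'' is not an argument; as written, both uses fail, and the theorem is unproved at exactly the point you identified as critical.

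Two remarks on repair. First, your second use is avoidable: the paper obtains $Sz=z$ with no commutativity at all, purely from $q$-affineness of $S$ and reciprocal continuity --- since $A_{n_j}x_{n_j}=x_{n_j}$, one has $x_{n_j}=Sx_{n_j}=SA_{n_j}x_{n_j}=k_{n_j}SAx_{n_j}+(1-k_{n_j})q\to Sz$ because $SAx_{n_j}\to Sz$, whence $Sz=z$ (and $Tz=z$ analogously via $TB_{n_j}x_{n_j}$); you should route the limit this way. Second, the first use (weak compatibility for Theorem 1.5) cannot be dispensed with, and the order mismatch you spotted is real --- it is present, silently, in the paper's own proof, which asserts in one line that $A_nx=Sx$ implies $A_nSx=SA_nx$. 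The intended reading, consistent with the Al-Thagafi--Shahzad source of the definition (where the map in the second slot of $C_q(\cdot,\cdot)$ is the one deformed toward $q$), is that the hypothesis supplies commutativity on $\{x:Sx\in[q,Ax]\}$, i.e.\ the pairs should be taken as $(S,A)$ and $(T,B)$. With that reading the step is a one-liner: $A_nx=Sx$ puts $x\in C(S,A_{k_n})$, hence $ASx=SAx$, and $q$-affineness of $S$ gives $SA_nx=k_nSAx+(1-k_n)q=k_nASx+(1-k_n)q=A_nSx$. So your diagnosis of the delicate point is accurate and even sharper than the paper's exposition, but your proposal both leaves that point open and needlessly sends the final pull-back through the same unjustified identity when a commutativity-free argument is available.
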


\begin{proof}
Choose a sequence $\{k_{n}\}\subset(0,1)$ with $\lim\limits_{n\to\infty}k_{n}=1$. Define for each $n\geq 1$ and for all $x\in E$, mappings $A_{n}$ and $B_{n}$ by
\[
A_{n}x=k_{n}Ax+(1-k_{n})q \quad \text{and} \quad B_{n}x=k_{n}Bx+(1-k_{n})q.
\]

Since $E$ is $q$-starshaped, $S$ and $T$ are $q$-affine, $A(E)\subseteq T(E)$ and $B(E)\subseteq S(E)$, we get $A_{n}x\in T(E)$ and $B_{n}x\in S(E)$.

Hence, $A_{n}(E)\subseteq T(E)$ and $B_{n}(E)\subseteq S(E)$, $n=1,2,3,\ldots$.

Also, for all $x,y\in E$,

\begin{align} 
\|A_{n}x-B_{n}y\|^{2} &= k_{n}^{2}\|Ax-By\|^{2} \nonumber\\
&\leq k_{n}^{2} c_{1} \max \{\left[\delta(Sx,[q,Ax])\right]^{2},\left[\delta(Ty,[q,By])\right]^{2},\|Sx-Ty\|^{2}\} \nonumber\\
&\quad +k_{n}^{2} c_{2}\max\{\delta(Sx,[q,Ax]) \delta(Sx,[q,By]),\delta(Ty,[q,By]) \delta(Ty,[q,Ax])\} \label{(2.1.2)}\\
&\quad +k_{n}^{2} c_{3} \delta(Sx,[q,By]) \delta(Ty,[q,Ax])\nonumber
\end{align}

Letting $a_{n}=k_{n}^{2} c_{1}$, $b_{n}=k_{n}^{2} c_{2}$ and $c_{n}=k_{n}^{2} c_{3}$, we have $a_{n},b_{n},c_{n}\geq 0$ and $a_{n}+2b_{n}<1$ and $a_{n}+c_{n}<1$.

Also, $\delta(Sx,[q,Ax])=\inf\{\|Sx-y\|:y\in[q,Ax]\}\leq\|Sx-y\|$ for all $y\in[q,Ax]$ for each $n=1,2,\ldots$.

In particular, for each $n\geq 1$ and for all $x\in E$ we have
\[
\delta(Sx,[q,Ax])\leq\|Sx-A_{n}x\|.
\]

Similarly, for each $n\geq 1$ and for all $x,y\in E$, we have
\[
\delta(Sx,[q,By])\leq\|Sx-B_{n}y\|,\quad \delta(Ty,[q,Ax])\leq\|Ty-A_{n}x\|\quad \text{and}\quad \delta(Ty,[q,By])\leq\|Ty-B_{n}y\|.
\]

Hence, from \eqref{(2.1.2)}, for each $n\geq 1$ and for all $x,y\in E$, we have
\[
\begin{aligned}
\|A_{n}x-B_{n}y\|^{2} &\leq a_{n}\,\max\big\{\|Sx-A_{n}x\|^{2},\|Ty-B_{n}y\|^{2},\|Sx-Ty\|^{2}\big\} \\
&\quad +b_{n}\max\{\|Sx-A_{n}x\|\,\|Sx-B_{n}y\|,\|Ty-B_{n}y\|\,\|Ty-A_{n}x\|\} \\
&\quad +c_{n}\,\|Sx-B_{n}y\|\,\|Ty-A_{n}x\|,
\end{aligned}
\]
where $a_{n},b_{n},c_{n}\geq 0$ and $a_{n}+2b_{n}<1$ and $a_{n}+c_{n}<1$.

Hence, for each $n\geq 0$, maps $A_{n}$, $B_{n}$, $S$ and $T$ satisfy the inequality (ii) of Theorem 1.3 \cite{TasEtAl1996}.

Since the pairs $(A,S)$ and $(B,S)$ are $C_{q}$-commuting and $S$ and $T$ are $q$-affine, if $A_{n}x=Sx$ and $B_{n}(x)=Tx$ for some $x\in E$, then we get $A_{n}Sx=SA_{n}x$ and $B_{n}Tx=TB_{n}x$ for each $n\geq 1$.

Thus, for each $n\geq 1$, the pairs $(A_{n},S)$ and $(B_{n},T)$ are weakly compatible maps on $E$. As either $S(E)$ or $T(E)$ is compact, then either $S(E)$ or $T(E)$ is complete. Therefore, for each $n\geq 1$, maps $A_{n}$, $B_{n}$, $S$ and $T$ satisfy all the conditions of Theorem 1.5 \cite{Kameswari2008} and hence for each $n\geq 1$, there exists a unique $x_{n}\in E$ such that

\begin{equation} \label{(2.1.3)}
A_{n}x_{n}=B_{n}x_{n}=Sx_{n}=Tx_{n}=x_{n},\quad n\geq 1.
\end{equation}
First let us assume $S(E)$ is compact. Since $\{Sx_{n}\}_{n=1}^{\infty}\subset S(E)$, there exists a subsequence $\{Sx_{n_{j}}\}$ of $\{Sx_{n}\}$ such that $\lim_{j\to\infty}Sx_{n_{j}}=z$ (say) in $S(E)$.

Hence, from \eqref{(2.1.3)}, we have
\begin{equation} \label{(2.1.4)}
    \lim_{j\to\infty}x_{n_{j}}=z.
\end{equation}

Therefore, from \eqref{(2.1.3)} using \eqref{(2.1.4)} we get

\begin{equation} \label{(2.1.5)}
\lim_{j\to\infty}A_{n_{j}}x_{n_{j}}=\lim_{j\to\infty}B_{n_{j}}x_{n_{j}}=\lim_{j\to\infty}Sx_{n_{j}}=\lim_{j\to\infty}Tx_{n_{j}}=x_{n_{j}}=z.
\end{equation}

Now assume that $(A,S)$ and $(B,T)$ are reciprocally continuous pairs. From (2.1.4) we get
\[
z=\lim_{j\to\infty}A_{n_{j}}x_{n_{j}}=\lim_{j\to\infty}[k_{n_{j}}Ax_{n_{j}}+(1-k_{n_{j}})q]=\lim_{j\to\infty}Ax_{n_{j}}.
\]

Hence,
\begin{equation} \label{(2.1.6)}
    \lim_{j\to\infty}Ax_{n_{j}}=\lim_{j\to\infty}Sx_{n_{j}}=z.
\end{equation}

As the pair $(A,S)$ is reciprocally continuous, from \eqref{(2.1.6)}, it follows that
\begin{equation} \label{(2.1.7)}
    \lim_{j\to\infty}ASx_{n_{j}}=Az \text{ and } \lim_{j\to\infty}SAx_{n_{j}}=Sz
\end{equation}

Now from \eqref{(2.1.5)} and \eqref{(2.1.7)}, we obtain
\[
\begin{aligned}
z &= \lim_{j\to\infty}A_{n_{j}}Sx_{n_{j}} 
= \lim_{j\to\infty}[k_{n_{j}}ASx_{n_{j}}+(1-k_{n_{j}})q] 
= \lim_{j\to\infty}ASx_{n_{j}} 
= Az,
\end{aligned}
\]
and
\begin{equation}
    z = \lim_{j\to\infty}SA_{n_{j}}x_{n_{j}} 
= \lim_{j\to\infty}[k_{n_{j}}SAx_{n_{j}}+(1-k_{n_{j}})q] 
= \lim_{j\to\infty}SAx_{n_{j}} 
= Sz. \label{(2.1.8)}
\end{equation}

Hence, $Az=Sz=z$.

From \eqref{(2.1.4)} we get
\[
\begin{aligned}
z = \lim_{j\to\infty}B_{n_{j}}x_{n_{j}} = \lim_{j\to\infty}[k_{n_{j}}Bx_{n_{j}}+(1-k_{n_{j}})q] = \lim_{j\to\infty}Bx_{n_{j}}.
\end{aligned}
\]

Hence,
\begin{equation}
    \lim_{j\to\infty}Bx_{n_{j}}=\lim_{j\to\infty}Tx_{n_{j}}=z. \label{(2.1.9)}
\end{equation}

As the pair $(B,T)$ is reciprocally continuous, from \eqref{(2.1.9)}, it follows that
\begin{equation}
    \lim_{j\to\infty}BTx_{n_{j}}=Bz \text{ and } \lim_{j\to\infty}TBx_{n_{j}}=Tz \label{(2.1.10)}
\end{equation}

Now from \eqref{(2.1.5)} and \eqref{(2.1.10)}, we obtain
\[
\begin{aligned}
z = \lim_{j\to\infty}B_{n_{j}}Tx_{n_{j}} = \lim_{j\to\infty}[k_{n_{j}}BTx_{n_{j}}+(1-k_{n_{j}})q] = \lim_{j\to\infty}BTx_{n_{j}} = Bz,
\end{aligned}
\]
and

\begin{align}
z = \lim_{j\to\infty}TB_{n_{j}}x_{n_{j}} = \lim_{j\to\infty}[k_{n_{j}}TBx_{n_{j}}+(1-k_{n_{j}})q] = \lim_{j\to\infty}TBx_{n_{j}} = Tz. 
\end{align} \label{(2.1.11)}

Hence, from \eqref{(2.1.8)} and \eqref{(2.1.11)}, we obtain
\begin{equation}
    Az=Bz=Sz=Tz=z. \label{(2.1.12)}
\end{equation}

The proof is similar when $S(E)$ is compact and the pairs $(A,T)$ and $(B,S)$ are reciprocally continuous on $E$.

In similar way, we can show that $A$, $B$, $S$ and $T$ have a common fixed point when $T(E)$ is compact and either the pairs $((A,S)$ and $(B,T))$ or the pairs $((A,T)$ and $(B,S))$ are reciprocally continuous on $E$.
\end{proof}

\begin{corollary} \label{Cor: 2.2}
Let $E$ be a nonempty $q$-starshaped subset of a normed linear space $X$. Let $A$, $B$, $S$ and $T$ be four selfmaps of $E$ satisfying $(i)$, $(iii)$, $(iv)$ of Theorem \eqref{Thrm: 2.1} and
\end{corollary}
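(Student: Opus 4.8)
The plan is to obtain Corollary \ref{Cor: 2.2} as an immediate consequence of Theorem \ref{Thrm: 2.1}, rather than re-running the entire approximating-sequence argument. Since hypotheses $(i)$, $(iii)$, and $(iv)$ are assumed verbatim, the only item needing attention is the contractive condition $(ii)$: I would show that the inequality postulated in the corollary forces the generalized Gregus-type inequality \eqref{(2.1.1)} to hold for the same four maps $A,B,S,T$ and the same admissible triple $c_{1},c_{2},c_{3}$ (with $c_{1}+2c_{2}=1$ and $c_{1}+c_{3}=1$). Once \eqref{(2.1.1)} is in place, every hypothesis of Theorem \ref{Thrm: 2.1} is met and the conclusion $F(A)\cap F(B)\cap F(S)\cap F(T)\neq\emptyset$ follows at once.

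Concretely, I would fix arbitrary $x,y\in E$ and control the four set-distances $\delta(Sx,[q,Ax])$, $\delta(Ty,[q,By])$, $\delta(Sx,[q,By])$, $\delta(Ty,[q,Ax])$ appearing on the right-hand side by the elementary bounds $\delta(u,[q,v])=\inf_{0\le\lambda\le1}\|u-(\lambda v+(1-\lambda)q)\|\le\|u-v\|$ and $\delta(u,[q,v])\le\|u-q\|$, exactly as in the derivation following \eqref{(2.1.2)} in the proof of Theorem \ref{Thrm: 2.1}. These estimates let me dominate the data of the corollary by quantities already handled in \eqref{(2.1.1)}, while the normalizations $c_{1}+2c_{2}=1$ and $c_{1}+c_{3}=1$ are preserved because the corollary uses the identical constants. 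In parallel I would check that the specialization does not disturb the structural part of $(iv)$: as $S$ and $T$ remain $q$-affine and the pairs remain $C_{q}$-commuting, the approximating maps $A_{n}x=k_{n}Ax+(1-k_{n})q$ and $B_{n}x=k_{n}Bx+(1-k_{n})q$ can still be formed, and the compactness of $S(E)$ or $T(E)$ is inherited unchanged.

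The step I expect to be the crux is the passage between the first-order (norm) and second-order (squared) forms of the contraction: if the corollary is phrased with un-squared norms, squaring both sides introduces mixed products $2\,\delta(\cdot)\,\delta(\cdot)$ that must be absorbed into the $c_{2}$- and $c_{3}$-terms of \eqref{(2.1.1)} without exceeding any constant. Here I would invoke $2ab\le a^{2}+b^{2}$ together with the constraint $c_{1}+2c_{2}=1$ to show that the cross terms are precisely balanced by the max-structure, so the admissible ranges $c_{1},c_{2},c_{3}\ge 0$ are respected throughout. If instead the corollary merely fixes a special choice of constants (for instance $c_{2}=c_{3}=0$, forcing $c_{1}=1$), this bookkeeping collapses to a direct verification and \eqref{(2.1.1)} holds trivially. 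In either case, with $(ii)$ confirmed and $(i)$, $(iii)$, $(iv)$ carried over unchanged, Theorem \ref{Thrm: 2.1} applies and delivers the desired common fixed point.
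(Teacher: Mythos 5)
Your proposal is correct and takes essentially the same route as the paper, whose entire proof is the specialization $c_{2}=c_{3}=0$, hence $c_{1}=1$, in Theorem \ref{Thrm: 2.1}, so that squaring \eqref{(2.2.1)} yields \eqref{(2.1.1)} verbatim. The crux you anticipated is actually vacuous: the right-hand side of \eqref{(2.2.1)} is a maximum of nonnegative terms rather than a sum, so squaring introduces no mixed products, since $\left(\max\{a,b,c\}\right)^{2}=\max\{a^{2},b^{2},c^{2}\}$ for $a,b,c\geq 0$.
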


\begin{align}
    \|Ax-By\|\leq \max\{\delta(Sx,[q,Ax]),\delta(Ty,[q,By]),\|Sx-Ty\|\}, \label{(2.2.1)}
\end{align}

for all $x,y\in X$. If either of $S(E)$ or $T(E)$ is compact, then $F(A)\cap F(B)\cap F(S)\cap F(T)\neq\emptyset$.

\begin{proof}
Choosing either $c_{2}=0$ or $c_{3}=0$, we get $c_{1}=1$ in condition (ii) of Theorem \eqref{Thrm: 2.1}, so that \eqref{(2.2.1)} holds and the conclusion follows by Theorem \eqref{Thrm: 2.1}.
\end{proof}

\begin{theorem} \label{Thrm: 2.3}
Let $E$ be a nonempty $q$-starshaped subset of a Banach space $X$. Let $A$, $B$, $S$ and $T$ be four selfmaps of $E$ satisfying (i), (ii) and (iv) of Theorem \eqref{Thrm: 2.1}. Further assume that $S$ and $T$ are weakly continuous on $E$ and $S-A$ and $T-B$ are demiclosed at 0. If either $S(E)$ or $T(E)$ is weakly compact, then $F(A)\cap F(B)\cap F(S)\cap F(T)\neq\emptyset$.
\end{theorem}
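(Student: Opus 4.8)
The plan is to mirror the machinery of Theorem~\ref{Thrm: 2.1} up to the construction of fixed points for the approximating maps, and then to replace the compactness/reciprocal-continuity step by a weak-compactness/demiclosedness step. First I would set $A_n x = k_n Ax + (1-k_n)q$ and $B_n x = k_n Bx + (1-k_n)q$ for a sequence $k_n \in (0,1)$ with $k_n \to 1$, exactly as before. The verifications that $A_n(E)\subseteq T(E)$, $B_n(E)\subseteq S(E)$, that $A_n,B_n,S,T$ satisfy inequality (ii) of Theorem~1.3, and that the pairs $(A_n,S)$ and $(B_n,T)$ are weakly compatible are word-for-word identical to the proof of Theorem~\ref{Thrm: 2.1}, since they use only the algebraic $C_q$-commuting and $q$-affine hypotheses in (iv). The one point needing re-examination is the completeness hypothesis of Theorem~1.5: here I would note that a weakly compact subset of a Banach space is weakly closed, hence norm-closed, hence norm-complete, so whichever of $S(E)$, $T(E)$ is weakly compact is a complete subspace. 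Theorem~1.5 then yields, for each $n$, a unique $x_n\in E$ with $A_n x_n = B_n x_n = S x_n = T x_n = x_n$.

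Next I would exploit weak compactness. Assuming $S(E)$ is weakly compact, the sequence $\{x_n\}=\{Sx_n\}\subset S(E)$ has, by the Eberlein--\v{S}mulian theorem, a subsequence $x_{n_j}$ converging weakly to some $z\in S(E)\subseteq E$. The crux of the argument is the identity obtained from $Sx_n=x_n$ and $k_n Ax_n+(1-k_n)q=A_n x_n=x_n$, namely
\[
(S-A)x_n = x_n - Ax_n = \frac{1-k_n}{k_n}\,(q - x_n).
\]
Since $S(E)$ is weakly compact it is norm-bounded, so $\{x_n\}$ is bounded; together with $k_n\to 1$ this forces $(S-A)x_{n_j}\to 0$ in norm. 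Because $x_{n_j}\rightharpoonup z$ weakly and $S-A$ is demiclosed at $0$, I conclude $(S-A)z=0$, i.e. $Az=Sz$. The same computation applied to $B_n,T$ gives $(T-B)x_{n_j}\to 0$ in norm, and demiclosedness of $T-B$ at $0$ yields $Bz=Tz$.

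Finally I would use weak continuity to identify $z$ as a fixed point. Since $S$ is weakly continuous and $x_{n_j}\rightharpoonup z$, we have $Sx_{n_j}\rightharpoonup Sz$; but $Sx_{n_j}=x_{n_j}\rightharpoonup z$, so by uniqueness of weak limits $Sz=z$, and likewise $Tz=z$ from weak continuity of $T$. Combining with the previous step gives $Az=Bz=Sz=Tz=z$, so $z\in F(A)\cap F(B)\cap F(S)\cap F(T)$. The case where $T(E)$ is weakly compact is symmetric. The step I expect to require the most care is justifying the norm-convergence $(S-A)x_{n_j}\to 0$ --- specifically, deriving the boundedness of $\{x_n\}$ from weak compactness --- since this is precisely what feeds demiclosedness and thereby replaces the strong-limit extraction that genuine compactness supplied in Theorem~\ref{Thrm: 2.1}.
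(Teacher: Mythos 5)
Your proposal is correct and follows essentially the same route as the paper's proof: construct $A_n,B_n$, invoke Theorem~1.5 (noting weak compactness implies completeness of $S(E)$ or $T(E)$) to obtain fixed points $x_n$, extract a weakly convergent subsequence, deduce $Sz=Tz=z$ from weak continuity, show $(S-A)x_{n_j}\to 0$ and $(T-B)x_{n_j}\to 0$ in norm via the identity coming from $A_nx_n=Sx_n=x_n$, and conclude $Az=Sz$, $Bz=Tz$ by demiclosedness at $0$. If anything, you are slightly more careful than the paper, which asserts $(1-k_{n_j})\|q-Ax_{n_j}\|\to 0$ without noting the boundedness of the sequence that your appeal to norm-boundedness of the weakly compact set $S(E)$ supplies.
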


\begin{proof}
Choose a sequence $\{k_{n}\}\subset(0,1)$ with $\lim\limits_{n\to\infty}k_{n}=1$. Define for each $n\geq 1$ and for all $x\in E$, mappings $A_{n}$ and $B_{n}$ by
\[
A_{n}x=k_{n}Ax+(1-k_{n})q \text{ and } B_{n}x=k_{n}Bx+(1-k_{n})q.
\]

Since every weakly compact subspace of a Banach space $X$ is complete and since either $S(E)$ or $T(E)$ is compact, then either $S(E)$ or $T(E)$ is complete. Therefore, from the proof of Theorem \eqref{Thrm: 2.1}, for each $n\geq 1$, maps $A_{n}$, $B_{n}$, $S$ and $T$ satisfy all the conditions of Theorem 1.5 \cite{Kameswari2008} and hence for each $n\geq 1$, there exists a unique $x_{n}\in E$ such that
\begin{align}
    A_{n}x_{n}=B_{n}x_{n}=Sx_{n}=Tx_{n}=x_{n}, \quad n\geq 1. \label{(2.3.1)}
\end{align}

First let us assume $S(E)$ is weakly compact. Since $\{Sx_{n}\}_{n=1}^{\infty}\subset S(E)$, there exists a subsequence $\{Sx_{n_{j}}\}$ of $\{Sx_{n}\}$ such that $w-\lim_{j\to\infty}Sx_{n_{j}}=z$ (say) in $S(E)$.\\
Hence, from \eqref{(2.3.1)}, we have
\begin{align}
    w-\lim_{j\to\infty}x_{n_{j}}=z. \label{(2.3.2)}
\end{align}
Therefore, from \eqref{(2.3.1)} using \eqref{(2.3.2)}, we get

\begin{align}
    w-\lim_{j\to\infty}A_{n_{j}}x_{n_{j}}=w-\lim_{j\to\infty}B_{n_{j}}x_{n_{j}}=w-\lim_{j\to\infty}Sx_{n_{j}}=w-\lim_{j\to\infty}Tx_{n_{j}}=z \label{(2.3.3)}
\end{align}
Now from \eqref{(2.3.3)} using \eqref{(2.3.2)} and the weak continuity of $S$ and $T$, we have
\begin{align}
    Sz=Tz=z \label{(2.3.4)}
\end{align}
Now since $\|x_{n_{j}}-Ax_{n_{j}}\|=(1-k_{n_{j}})\|q-Ax_{n_{j}}\|\to 0$ as $n\to\infty$ and $Sx_{n_{j}}=x_{n_{j}}$ for all $j\geq 1$, we have
\begin{align}
    \|Sx_{n_{j}}-Ax_{n_{j}}\|\to 0 \text{ as } n\to\infty. \label{(2.3.5)}
\end{align}
Since $x_{n_{j}}\to z$ weakly as $n\to\infty$, by \eqref{(2.3.5)} and the demiclosedness of $S-A$, we obtain
\begin{align}
    Az=Sz \label{(2.3.6)}
\end{align}
Similarly, since $x_{n_{j}}\to z$ weakly as $n\to\infty$, by the demiclosedness of $T-B$, we obtain
\begin{align}
    Bz=Tz. \label{(2.3.6)}
\end{align}
Hence, from \eqref{(2.3.3)}, \eqref{(2.3.5)} and \eqref{(2.3.6)}, we get
\begin{align}
    Az=Bz=Sz=Tz=z. \label{(2.3.7)}
\end{align}

In a similar way, we can show that $A$, $B$, $S$ and $T$ have a common fixed point when $T(E)$ is weakly compact.
\end{proof}
\begin{corollary} \label{Cor: 2.4}
Let $E$ be a nonempty $q$-starshaped subset of a Banach space $X$. Let $A$, $B$, $S$ and $T$ be four selfmaps of $E$ satisfying (i) and (iii) of Theorem \eqref{Thrm: 2.1}, and
\begin{align}
    \|Ax-By\|\leq \max\{\delta(Sx,[q,Ax]),\delta(Ty,[q,By]),\|Sx-Ty\|\}, \label{(2.4.1)}
\end{align}
for all $x,y\in X$. Further assume that $S$ and $T$ are weakly continuous on $E$ and $S-A$ and $T-B$ are demiclosed at 0. If either of $S(E)$ or $T(E)$ is weakly compact, then
\[
F(A)\cap F(B)\cap F(S)\cap F(T)\neq\emptyset.
\]
\end{corollary}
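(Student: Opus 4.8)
The plan is to obtain Corollary \ref{Cor: 2.4} as an immediate specialization of Theorem \ref{Thrm: 2.3}, exactly as Corollary \ref{Cor: 2.2} is deduced from Theorem \ref{Thrm: 2.1}. The idea is to show that the linear Greg\'{u}s-type inequality \eqref{(2.4.1)} forces the quadratic inequality \eqref{(2.1.1)} (that is, condition (ii) of Theorem \ref{Thrm: 2.1}) under the coefficient choice $c_1 = 1$, $c_2 = c_3 = 0$, after which Theorem \ref{Thrm: 2.3} applies verbatim.

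First I would square both sides of \eqref{(2.4.1)}. Because the set-distances $\delta(Sx,[q,Ax])$, $\delta(Ty,[q,By])$ and the norm $\|Sx-Ty\|$ are all nonnegative and $t \mapsto t^2$ is increasing on $[0,\infty)$, the square passes through the maximum, i.e.\ $[\max\{a,b,c\}]^2 = \max\{a^2,b^2,c^2\}$ for $a,b,c \geq 0$. This gives
\[
\|Ax-By\|^2 \leq \max\{[\delta(Sx,[q,Ax])]^2,[\delta(Ty,[q,By])]^2,\|Sx-Ty\|^2\}
\]
for all $x,y \in X$, which is precisely \eqref{(2.1.1)} with $c_1 = 1$, $c_2 = 0$, $c_3 = 0$. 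These coefficients are nonnegative and satisfy the constraints $c_1 + 2c_2 = 1$ and $c_1 + c_3 = 1$, so condition (ii) of Theorem \ref{Thrm: 2.1} holds.

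With condition (ii) established, all the structural hypotheses required by Theorem \ref{Thrm: 2.3} are in force: condition (i) and the $C_q$-commuting, $q$-affine condition (iv) of Theorem \ref{Thrm: 2.1}, the weak continuity of $S$ and $T$, the demiclosedness of $S-A$ and $T-B$ at $0$, and the weak compactness of one of $S(E)$, $T(E)$. Applying Theorem \ref{Thrm: 2.3} then yields $F(A)\cap F(B)\cap F(S)\cap F(T)\neq\emptyset$.

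I anticipate no genuine obstacle: the argument is a clean specialization and involves no new analysis beyond Theorem \ref{Thrm: 2.3}. The single point deserving (elementary) care is the interchange of squaring with the maximum, which is legitimate precisely because every quantity being compared is nonnegative; were any term allowed to be negative this step would fail, but here the norm and the set-distances guarantee nonnegativity.
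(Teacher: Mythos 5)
Your proof is correct and is essentially the paper's own: the paper likewise deduces the corollary by specializing to $c_1=1$, $c_2=c_3=0$ in condition (ii) of Theorem \ref{Thrm: 2.1} and invoking Theorem \ref{Thrm: 2.3}. Your explicit justification that squaring passes through the maximum of nonnegative quantities, so that \eqref{(2.4.1)} implies \eqref{(2.1.1)}, is a point the paper's one-line proof leaves implicit (and indeed states in the reverse direction, saying the coefficient choice makes \eqref{(2.4.1)} ``hold''); your direction of implication is the correct one. One caution: the corollary as stated assumes hypotheses (i) and (iii) of Theorem \ref{Thrm: 2.1}, whereas Theorem \ref{Thrm: 2.3} requires (iv) (the $C_q$-commuting and $q$-affine condition) rather than (iii), so your assertion that (iv) ``is in force'' is not granted by the literal statement --- this appears to be a typo in the paper (compare Corollary \ref{Cor: 2.2}, which assumes (iii) \emph{and} (iv)), and the paper's own proof glosses over it in exactly the same way.
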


\begin{proof}
Choosing either $c_{2}=0$ or $c_{3}=0$, we get $c_{1}=1$ in condition (ii) of Theorem \eqref{Thrm: 2.1}, so that \eqref{(2.4.1)} holds and the conclusion follows by Theorem \eqref{Thrm: 2.3}.
\end{proof}

\begin{corollary}[ \cite{Nashine2007}] \label{Cor: 2.5}
Let $S$ and $T$ be selfmaps of a weakly compact subset $E$ of a Banach space $X$ and $\mathcal{J}$ be a family of selfmaps $A:E\to E$ such that $A(E)\subseteq S(E)\cap T(E)$. $A$ commutes with $S$ and $T$. Suppose $E$ is $q$-starshaped with $q\in F(S)\cap F(T)$; $S$ and $T$ are affine with respect to $q$ and continuous in the weak topology. If $S$, $T$ and $A,B\in\mathcal{J}$ satisfy
\begin{align}
    \|Ax-By\|\leq\|Sx-Ty\|, \label{(2.5.1)}
\end{align}
for all $x\neq y\in X$, then
\[
F(A)\cap F(B)\cap F(S)\cap F(T)\neq\emptyset \text{ provided that } S-A \text{ and } T-B \text{ are demiclosed}.
\]
\end{corollary}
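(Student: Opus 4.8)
The plan is to deduce this statement from Theorem~\ref{Thrm: 2.3} by taking the Gregus constants to be $c_1 = 1$, $c_2 = c_3 = 0$ (which satisfy $c_1 + 2c_2 = 1$ and $c_1 + c_3 = 1$), so that the task reduces to checking that each hypothesis of the corollary implies the corresponding hypothesis of that theorem. I would organize the verification into the structural conditions $(i)$ and $(iv)$, the weak-topological conditions (weak continuity, demiclosedness, weak compactness), and the contractive inequality $(ii)$. Most of these are immediate translations; the two genuine steps are upgrading ``affine with respect to $q$'' to ``$q$-affine'', and coping with the restriction $x\neq y$ in the contractive inequality.

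For the structural conditions, the assumption $A(E)\subseteq S(E)\cap T(E)$ for every member of $\mathcal{J}$ gives both $A(E)\subseteq T(E)$ and $B(E)\subseteq S(E)$, which is exactly $(i)$. Since each $A\in\mathcal{J}$ commutes with $S$ and $T$, we have $ASx = SAx$ and $BTx = TBx$ for all $x\in E$, hence in particular on $C_q(A,S)$ and $C_q(B,T)$; so both pairs are $C_q$-commuting. For the remaining half of $(iv)$ I would argue directly from the definitions: as $S$ is affine with respect to $q$, $S(\lambda x + (1-\lambda)q) = \lambda Sx + (1-\lambda)Sq$, and since $q\in F(S)\cap F(T)$ we have $Sq = q$, whence $S(\lambda x + (1-\lambda)q) = \lambda Sx + (1-\lambda)q$; the same holds for $T$. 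This is precisely $q$-affineness. The point to emphasize is Remark~1.8(3): affineness with respect to $q$ is strictly weaker than $q$-affineness in general, and the assumption $q\in F(S)\cap F(T)$ is exactly what bridges that gap.

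On the weak-topological side, $S$ and $T$ are assumed continuous in the weak topology, hence weakly continuous, and $S-A$, $T-B$ are assumed demiclosed, which I would read as demiclosed at $0$ as Theorem~\ref{Thrm: 2.3} requires. Since $E$ is weakly compact and $S$, $T$ are weakly continuous, $S(E)$ and $T(E)$ are images of a weakly compact set under weakly continuous maps and are therefore weakly compact, so the compactness hypothesis of the theorem is met. It then remains to produce $(ii)$ with $c_1=1$, $c_2=c_3=0$, i.e. $\|Ax-By\|^2 \le \max\{[\delta(Sx,[q,Ax])]^2,\,[\delta(Ty,[q,By])]^2,\,\|Sx-Ty\|^2\}$; for $x\neq y$ the hypothesis $\|Ax-By\|\le\|Sx-Ty\|$ yields $\|Ax-By\|^2\le\|Sx-Ty\|^2$, which is dominated by that maximum.

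The main obstacle I anticipate is the diagonal case $x=y$, which the corollary's inequality does not literally cover since it is posited only for $x\neq y$. I would argue that this causes no loss: the contractive condition enters Theorem~\ref{Thrm: 2.3} only through Theorem~1.5, where it is applied to successive (and generically distinct) iterates of the approximating scheme and, in the uniqueness step, to a pair of distinct candidate fixed points --- should two consecutive iterates ever coincide, a coincidence point has already been produced and the iteration terminates. Hence the restriction to $x\neq y$ is immaterial, and one may invoke Theorem~\ref{Thrm: 2.3} with the above constants to obtain a point $z$ with $Az = Bz = Sz = Tz = z$, giving $F(A)\cap F(B)\cap F(S)\cap F(T)\neq\emptyset$, as desired.
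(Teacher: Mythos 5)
Your reduction is exactly the paper's route: the paper proves this corollary in one line, ``Follows from Corollary \ref{Cor: 2.4}'', and Corollary \ref{Cor: 2.4} is precisely Theorem \ref{Thrm: 2.3} with $c_1=1$, $c_2=c_3=0$, which is the instantiation you chose. All the verifications you supply are correct and are exactly what the paper leaves implicit: full commutativity trivially implies $C_q$-commutativity of the pairs $(A,S)$ and $(B,T)$; affineness with respect to $q$ together with $q\in F(S)\cap F(T)$ (so $Sq=q=Tq$) upgrades to $q$-affineness, which is the right reading of Remark 1.8(3); ``continuous in the weak topology'' is weak continuity; and the weak compactness of $S(E)$, $T(E)$ as weakly continuous images of the weakly compact set $E$ supplies the compactness hypothesis. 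Since $\|Ax-By\|\leq\|Sx-Ty\|$ is dominated by the max in \eqref{(2.4.1)}, the inequality transfers for $x\neq y$.

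The one step that does not hold up as written is your patch for the diagonal case $x=y$. Your argument appeals to the internal structure of the proof of Theorem 1.5, but that result is cited from Kameswari's doctoral thesis and its proof is not reproduced in the paper, so ``the inequality is only applied to generically distinct iterates'' cannot be certified; worse, proofs of such four-map theorems typically \emph{do} instantiate the contractive condition with the same point in both slots (for instance with $x=y=z$ at a candidate coincidence point $Sz=Tz=z$ to conclude $Az=Bz$), so the claim is likely false in detail, and distinctness of indices in an iteration does not give distinctness of the underlying points in any case. That said, this is a defect you inherited rather than introduced: the corollary's hypothesis \eqref{(2.5.1)} is stated only for $x\neq y$, while Corollary \ref{Cor: 2.4} requires \eqref{(2.4.1)} for all $x,y$, and the paper's one-line proof silently ignores this mismatch (as, apparently, does Nashine's original formulation). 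A clean write-up should either assume \eqref{(2.5.1)} for all $x,y$, or explicitly flag the diagonal case as a gap in the statement, rather than argue it away through the unavailable proof of Theorem 1.5.
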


\begin{proof}
Follows from Corollary \eqref{Cor: 2.4}.
\end{proof}

\begin{example} \label{Ex: 2.6}
Let $X=\mathbb{R}$ with the usual metric and $E=[0,1]$. We define mappings $A$, $B$, $S$ and $T$ on $E$ by

\begin{align*}
A(x) &= B(x) = 
\begin{cases} 
\frac{2}{3} & \text{if } 0 \leq x < \frac{2}{3} \\
\frac{4}{3} - x & \text{if } \frac{2}{3} \leq x \leq 1 
\end{cases} \\
S(x) &= 
\begin{cases} 
\frac{2}{3} & \text{if } 0 \leq x < \frac{2}{3} \\
1 - \frac{x}{2} & \text{if } \frac{2}{3} \leq x \leq 1 
\end{cases} \\
T(x) &= 
\begin{cases} 
1 - \frac{x}{2} & \text{if } 0 \leq x < \frac{2}{3} \\
\frac{2}{3} & \text{if } \frac{2}{3} \leq x \leq 1 
\end{cases}
\end{align*}

with star-center $q = \frac{2}{3}$.
\end{example}
\paragraph{Verification of Conditions}

\begin{enumerate}[label=(\roman*)]
\item \textbf{$C_q$-commuting pairs}:

\begin{itemize}
\item For $(A,S)$: $C_q(A,S) = E$ since for all $x \in E$:
\begin{align*}
A(x) &= S_k(x) = kS(x) + (1-k)\frac{2}{3} \\
\text{has solution } k &= 
\begin{cases}
\text{any } k \in [0,1] & \text{for } x \in [0,\frac{2}{3}) \\
\frac{1}{2} & \text{for } x \in [\frac{2}{3},1]
\end{cases}
\end{align*}
and $ASx = SAx$ holds everywhere.

\item For $(B,T)$: $C_q(B,T) = \{\frac{2}{3}\}$ since:
\begin{align*}
B(\tfrac{2}{3}) &= T_k(\tfrac{2}{3}) = \tfrac{2}{3} \text{ for all } k \\
B(x) &= T_k(x) \text{ has no solution for } x \neq \tfrac{2}{3} \text{ when } k > 0
\end{align*}
and $BT(\frac{2}{3}) = TB(\frac{2}{3}) = \frac{2}{3}$.
\end{itemize}

\item \textbf{Inequality (2.1.1)}:
With $c_1=1$, $c_2=c_3=0$, verify:
\[
\|Ax - By\| \leq \max\{\delta(Sx,[q,Ax]), \delta(Ty,[q,By]), \|Sx-Ty\|\}
\]
All cases satisfy this, e.g.:
\begin{itemize}
\item When $x,y \in [0,\frac{2}{3})$: Both sides $= 0$
\item When $x \in [\frac{2}{3},1]$, $y \in [0,\frac{2}{3})$:
\[
\|Ax-By\| = \|\tfrac{4}{3}-x - \tfrac{2}{3}\| \leq \tfrac{2}{3} - x \leq \|Sx-Ty\|
\]
\end{itemize}

\item \textbf{Reciprocal continuity}:
All maps are piecewise linear with matching values at $x=\frac{2}{3}$, ensuring:
\[
\lim_{x_n \to \tfrac{2}{3}} ASx_n = A(\tfrac{2}{3}) = \tfrac{2}{3} = SA(\tfrac{2}{3})
\]

\item \textbf{$q$-affineness}:
$S$ and $T$ satisfy:
\[
S(\lambda x + (1-\lambda)\tfrac{2}{3}) = \lambda Sx + (1-\lambda)\tfrac{2}{3}
\]
and similarly for $T$.
\end{enumerate}

Hence, Example \eqref{Ex: 2.6} shows that Theorem \eqref{Thrm: 2.3} is a generalization of Theorem 3.1 of Nashine \cite{Nashine2007}.

\section{Invariant Approximation Results}

In this section, we prove the existence of common fixed points in the set of best approximations by using Theorem \eqref{Thrm: 2.1} and Theorem \eqref{Thrm: 2.3}.\\
In what follows we need the following lemma which we state without proof.

\begin{lemma}[ \cite{HicksHumphries1989}] \label{Lem: 3.1}
Let $E$ be a subset of $X$. Then for any $u\in X$, $P_{E}(u)\subseteq\partial E$ (the boundary of $E$).
\end{lemma}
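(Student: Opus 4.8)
The plan is to unwind the definition of $P_E(u)$ and argue by contradiction that a best approximant can never be an interior point of $E$. Recall that $x \in P_E(u)$ means $x \in E$ together with $\|u-x\| = \delta(u,E)$, where $\delta(u,E) = \inf\{\|u-z\| : z \in E\}$. Since $\partial E = \overline{E}\setminus\operatorname{int}(E)$ and every point of $P_E(u)$ already lies in $E \subseteq \overline{E}$, it suffices to show that no $x \in P_E(u)$ lies in $\operatorname{int}(E)$; this immediately gives $x \in \partial E$, hence $P_E(u)\subseteq\partial E$.

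So I would suppose, for contradiction, that some $x \in P_E(u)$ satisfies $x \in \operatorname{int}(E)$, and (momentarily setting aside the degenerate case) assume $u \neq x$ so that $\rho := \|u-x\| = \delta(u,E) > 0$. By definition of interior there is $r>0$ with the open ball $B(x,r)\subseteq E$. For $t \in (0,1)$ set $x_t = (1-t)x + tu$, the point obtained by sliding from $x$ a little toward $u$. A direct computation gives $\|x_t - x\| = t\rho$ and $\|u - x_t\| = (1-t)\rho$. Hence for any $t$ with $0 < t < r/\rho$ we have $x_t \in B(x,r)\subseteq E$ while $\|u-x_t\| = (1-t)\rho < \rho = \delta(u,E)$, contradicting the fact that $\delta(u,E)$ is the infimum of $\|u-z\|$ over $z \in E$. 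Therefore $x \notin \operatorname{int}(E)$, which is exactly $x \in \partial E$.

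The only delicate point is the degenerate case $\rho = 0$, i.e. $u \in P_E(u)$: this forces $u = x$ and $\delta(u,E) = 0$, the sliding argument collapses, and an interior best approximant really is possible. That situation is precisely $u \in \operatorname{int}(E)$, which the statement tacitly excludes, since the lemma is applied only for $u$ with $\delta(u,E) > 0$. Consequently the substantive content is the geometric step above, and I expect the main care to go into two bookkeeping matters: confirming that the construction is valid in an arbitrary normed linear space (it is, since it uses only the vector-space and norm structure, with $B(x,r)$ available directly from $x \in \operatorname{int}(E)$, and requires neither completeness nor finite dimension), and disposing cleanly of the trivial $\delta(u,E)=0$ case so that the statement is read with $u$ lying outside $E$.
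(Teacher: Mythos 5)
Your argument is correct, and there is in fact nothing in the paper to compare it against: the authors explicitly state Lemma \ref{Lem: 3.1} without proof, citing Hicks and Humphries \cite{HicksHumphries1989}, so your write-up supplies the standard argument that the paper omits. The computation is sound: for $x_t=(1-t)x+tu$ one has $\|x_t-x\|=t\|u-x\|$ and $\|u-x_t\|=(1-t)\|u-x\|$, so any $t\in\bigl(0,\min\{1,r/\rho\}\bigr)$ produces a point of $B(x,r)\subseteq E$ strictly closer to $u$ than $\delta(u,E)$, a contradiction; and you are right that this uses only the vector-space and norm structure, so it holds in any normed linear space without completeness or dimension restrictions. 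Your flag of the degenerate case is moreover a genuine catch rather than mere bookkeeping: as literally stated, for arbitrary $u\in X$, the lemma is false, since $u\in\operatorname{int}(E)$ gives $\delta(u,E)=0$ and $P_E(u)=\{u\}\not\subseteq\partial E$; the correct reading, as in the original source, takes $u\notin E$. Note also that in the paper's application this gap is harmless for a slightly different reason than the one you give: in Theorem \ref{Thrm: 3.2} the point $u$ is assumed to be a common fixed point of $A$, $B$, $S$, $T$, so if $u\in E$ then $u\in P_E(u)$ and the theorem's conclusion $P_E(u)\cap F(A)\cap F(B)\cap F(S)\cap F(T)\neq\emptyset$ holds trivially, making $u\notin E$ the only substantive case. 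One cosmetic remark: Definition 1.13 of the paper defines $P_E(u)=\{x\in E: d(x,y)=\delta(u,E)\}$, an evident typo for $\|u-x\|=\delta(u,E)$; you correctly worked with the intended definition.
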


\begin{theorem} \label{Thrm: 3.2}
Let $E$ be a subset of a normed linear space $X$ and $A,B,S,T:E\to E$ be four selfmaps such that $u\in F(A)\cap F(B)\cap F(S)\cap F(T)$ for some $u\in X$ with $S(\partial E)\subset E$ and $T(\partial E)\subset E$. Assume that $P_{E}(u)$ is $q$-starshaped and compact with $q\in F(S)\cap F(T)$, $S$ and $T$ are affine on $P_{E}(u)$ and $S(P_{E}(u))=P_{E}(u)=T(P_{E}(u))$. Suppose also that selfmaps $A$, $B$, $S$ and $T$ satisfy for all $x,y\in P_{E}(u)\cup\{u\}$,
\begin{equation}
    \|Ax-By\|^{2}\leq
\begin{cases}
\|Sx-Tu\|^{2} & \text{if } y=u \\
\|Su-Ty\|^{2} & \text{if } x=u \\
\begin{aligned}
&c_{1} \max \{[\delta(Sx,[q,Ax])]^{2},[\delta(Ty,[q,By])]^{2},\|Sx-Ty\|^{2}\} \\
&+ c_{2} \max \{\delta(Sx,[q,Ax]) \delta(Sx,[q,By]),\delta(Ty,[q,By]) \delta(Ty,[q,Ax])\} \\
&+ c_{3} \delta(Sx,[q,By]) \delta(Ty,[q,Ax])
\end{aligned} & \text{if } x,y\in P_{E}(u),
\end{cases} \label{(3.2.1)}
\end{equation}
where $c_{1},c_{2},c_{3}\geq 0$, $c_{1}+2c_{2}=1$, $c_{1}+c_{3}=1$; and either of the pairs $(A,S)$ and $(B,T)$ or $(A,T)$ and $(B,S)$ are reciprocally continuous on $P_{E}(u)$. If both the pairs $(A,S)$ and $(B,T)$ are $C_{q}$-commuting on $P_{E}(u)$, then $P_{E}(u)\cap F(A)\cap F(B)\cap F(S)\cap F(T)\neq\emptyset$.
\end{theorem}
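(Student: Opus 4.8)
The plan is to pass to the restrictions of $A$, $B$, $S$, $T$ to the set $P_E(u)$ and apply Theorem \eqref{Thrm: 2.1} there. The only step that genuinely uses the best-approximation structure is showing that $A$ and $B$ leave $P_E(u)$ invariant; once this is in place, every hypothesis of Theorem \eqref{Thrm: 2.1} is inherited by $P_E(u)$ and the conclusion follows at once.

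First I would prove $A(P_E(u))\subseteq P_E(u)$. Fix $x\in P_E(u)$. Since $A$ maps $E$ into $E$ and $P_E(u)\subseteq E$, we have $Ax\in E$, so $\|Ax-u\|\geq\delta(u,E)$. For the reverse estimate, invoke the branch $y=u$ of \eqref{(3.2.1)} together with $u\in F(B)\cap F(T)$, which gives $Bu=u$ and $Tu=u$; hence $\|Ax-u\|^{2}\leq\|Sx-u\|^{2}$. Because $S(P_E(u))=P_E(u)$, the point $Sx$ belongs to $P_E(u)$, so $\|Sx-u\|=\delta(u,E)$. Combining the two estimates yields $\|Ax-u\|=\delta(u,E)$, i.e. $Ax\in P_E(u)$. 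The inclusion $B(P_E(u))\subseteq P_E(u)$ is obtained symmetrically from the branch $x=u$ of \eqref{(3.2.1)}, using $u\in F(A)\cap F(S)$ and $T(P_E(u))=P_E(u)$. I note that the boundary hypotheses $S(\partial E)\subset E$, $T(\partial E)\subset E$ and Lemma \eqref{Lem: 3.1} are not actually invoked here, since the stronger invariance $S(P_E(u))=P_E(u)=T(P_E(u))$ is assumed directly.

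With invariance established, I would verify the hypotheses of Theorem \eqref{Thrm: 2.1} for the four maps restricted to the $q$-starshaped set $P_E(u)$. Condition (i) holds since $A(P_E(u))\subseteq P_E(u)=T(P_E(u))$ and $B(P_E(u))\subseteq P_E(u)=S(P_E(u))$; condition (ii) is precisely the third branch of \eqref{(3.2.1)}, valid for all $x,y\in P_E(u)$; reciprocal continuity (iii) is assumed on $P_E(u)$, as is the $C_q$-commutativity in (iv). The $q$-affineness of $S$ and $T$ required by (iv) follows from their affineness on $P_E(u)$ together with $q\in F(S)\cap F(T)$ via part (2) of Remark 1.8. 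Finally $S(P_E(u))=P_E(u)$ is compact, so the compactness requirement of Theorem \eqref{Thrm: 2.1} is met.

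Applying Theorem \eqref{Thrm: 2.1} then furnishes a point $z\in P_E(u)$ with $Az=Bz=Sz=Tz=z$, which is exactly a point of $P_E(u)\cap F(A)\cap F(B)\cap F(S)\cap F(T)$. The main obstacle is the invariance argument of the second paragraph: it is there that the definition of $P_E(u)$, the common-fixed-point property of $u$, and the two exceptional branches of \eqref{(3.2.1)} must be combined to force both $\|Ax-u\|$ and $\|By-u\|$ down to the value $\delta(u,E)$. Everything after that is a routine transfer of hypotheses into Theorem \eqref{Thrm: 2.1}.
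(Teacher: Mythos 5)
Your proof is correct and takes essentially the same route as the paper's: use the exceptional branches of \eqref{(3.2.1)} with $u\in F(A)\cap F(B)\cap F(S)\cap F(T)$ and $S(P_{E}(u))=P_{E}(u)=T(P_{E}(u))$ to show $A$ and $B$ leave $P_{E}(u)$ invariant, then apply Theorem \eqref{Thrm: 2.1} on the compact $q$-starshaped set $P_{E}(u)$. If anything your write-up is cleaner: the paper's proof appeals to Lemma \eqref{Lem: 3.1} and to conditions $A(\partial E)\subset E$, $B(\partial E)\subset E$ that are not among the stated hypotheses (which only give $S(\partial E)\subset E$ and $T(\partial E)\subset E$), whereas you correctly observe that $Ax,By\in E$ already follows from $A,B$ being selfmaps of $E$, making that detour unnecessary.
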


\begin{proof}
Let $y\in P_{E}(u)$. Then by Lemma \eqref{Lem: 3.1}, $y\in\partial E$. Since $A(\partial E)\subset E$ and $B(\partial E)\subset E$, we have $Ay,By\in E$. Also, since $Sy,Ty\in P_{E}(u)$, $u\in F(A)\cap F(B)\cap F(S)\cap F(T)$ and $A$, $B$, $S$ and $T$ satisfy the inequality \eqref{(3.2.1)}, we have $\|Ay-u\|\leq\delta(u,E)$ and $\|u-By\|\leq\delta(u,E)$.
\end{proof}
Hence, $Ay,By\in P_{E}(u)$ and hence $A(P_{E}(u))\subseteq S(P_{E}(u))$ and $B(P_{E}(u))\subseteq T(P_{E}(u))$.\\
Therefore, by Theorem \eqref{Thrm: 2.1}, there exists $z\in P_{E}(u)$ such that $Az=Bz=Sz=Tz=z$.\\
Hence, $P_{E}(u)\cap F(A)\cap F(B)\cap F(S)\cap F(T)\neq\emptyset$.

\begin{theorem} \label{Thrm: 3.3}
Let $E$ be a subset of a Banach space $X$ and $A,B,S,T:E\to E$ be four selfmaps such that $u\in F(A)\cap F(B)\cap F(S)\cap F(T)$ for some $u\in X$ with $S(\partial E)\subset E$ and $T(\partial E)\subset E$. Assume that $P_{E}(u)$ is $q$-starshaped and weakly compact with $q\in F(S)\cap F(T)$, $S$ and $T$ are affine on $P_{E}(u)$ and $S(P_{E}(u))=P_{E}(u)=T(P_{E}(u))$. Suppose also that selfmaps $A$, $B$, $S$ and $T$ satisfy the inequality \eqref{(3.2.1)} and $S$ and $T$ are weakly continuous on $P_{E}(u)$; and $S-A$ and $T-B$ are demiclosed at $\theta$. If both pairs $(A,S)$ and $(B,T)$ are $C_{q}$-commuting on $P_{E}(u)$ , then
\[
P_{E}(u)\cap F(A)\cap F(B)\cap F(S)\cap F(T)\neq\emptyset.
\]
\end{theorem}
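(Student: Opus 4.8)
The plan is to follow the proof of Theorem \eqref{Thrm: 3.2} line for line, the sole modification being that the concluding appeal is made to Theorem \eqref{Thrm: 2.3} (the weakly compact, weakly continuous, demiclosed version) rather than to Theorem \eqref{Thrm: 2.1}. Thus the whole argument reduces to showing that $A$ and $B$ carry $P_{E}(u)$ into itself, so that the four maps, restricted to $P_{E}(u)$, meet every hypothesis of Theorem \eqref{Thrm: 2.3}.

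First I would fix an arbitrary $p \in P_{E}(u)$. By Lemma \eqref{Lem: 3.1} we have $p \in \partial E$, and since $A, B$ are selfmaps of $E$ it follows that $Ap, Bp \in E$, whence $\|u - Ap\| \geq \delta(u,E)$ and $\|u - Bp\| \geq \delta(u,E)$. To get the reverse estimates I would invoke the two $u$-branches of \eqref{(3.2.1)}. Applying the $y=u$ branch to the pair $(p, u)$ and using $Bu = Tu = u$ (since $u \in F(B) \cap F(T)$) gives $\|Ap - u\|^{2} \leq \|Sp - u\|^{2}$; because $Sp \in S(P_{E}(u)) = P_{E}(u)$ we have $\|Sp - u\| = \delta(u,E)$, so $\|Ap - u\| = \delta(u,E)$ and $Ap \in P_{E}(u)$. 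Symmetrically, the $x=u$ branch applied to $(u, p)$, together with $Au = Su = u$, yields $Bp \in P_{E}(u)$. Hence $A(P_{E}(u)) \subseteq P_{E}(u) = S(P_{E}(u))$ and $B(P_{E}(u)) \subseteq P_{E}(u) = T(P_{E}(u))$, which is hypothesis (i) of Theorem \eqref{Thrm: 2.1} for the system restricted to $P_{E}(u)$.

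Next I would check the remaining hypotheses of Theorem \eqref{Thrm: 2.3} on $P_{E}(u)$. For $x, y \in P_{E}(u)$ the third branch of \eqref{(3.2.1)} is precisely inequality \eqref{(2.1.1)}, so (ii) holds. By assumption $(A,S)$ and $(B,T)$ are $C_{q}$-commuting; and since $S, T$ are affine on $P_{E}(u)$ with $q \in F(S) \cap F(T)$, Remark 1.8(2) shows that $S$ and $T$ are in fact $q$-affine, giving (iv). The weak continuity of $S$ and $T$, the demiclosedness of $S - A$ and $T - B$ at $\theta$, the $q$-starshapedness of $P_{E}(u)$, and the weak compactness of $P_{E}(u) = S(P_{E}(u)) = T(P_{E}(u))$ are all assumed directly. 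Applying Theorem \eqref{Thrm: 2.3} to $A, B, S, T$ on the weakly compact $q$-starshaped set $P_{E}(u)$ then produces $z \in P_{E}(u)$ with $Az = Bz = Sz = Tz = z$, so that $z \in P_{E}(u) \cap F(A) \cap F(B) \cap F(S) \cap F(T)$.

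The hard part is the invariance step — verifying that $A$ and $B$ map $P_{E}(u)$ back into $P_{E}(u)$ — since this is the only place where the geometry of the best-approximation set and the special $u$-branches of \eqref{(3.2.1)} are genuinely used; once invariance is in hand, the remainder is a direct transcription of the stated hypotheses into the requirements of Theorem \eqref{Thrm: 2.3}.
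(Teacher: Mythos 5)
Your proposal is correct and follows essentially the same route as the paper, whose proof of Theorem \ref{Thrm: 3.3} is literally ``runs on the same lines as Theorem \ref{Thrm: 3.2}, using Theorem \ref{Thrm: 2.3} instead of Theorem \ref{Thrm: 2.1}.'' In fact you supply details the paper leaves implicit --- the explicit use of the $y=u$ and $x=u$ branches of \eqref{(3.2.1)} for the invariance step, and the observation via Remark 1.8(2) that affinity of $S,T$ together with $q\in F(S)\cap F(T)$ yields the $q$-affineness required by hypothesis (iv) --- so your write-up is a faithful, more complete rendering of the paper's argument.
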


\begin{proof}
Runs on the same lines as that of the proof of Theorem \eqref{Thrm: 3.2}, where we use Theorem \eqref{Thrm: 2.3} instead of Theorem \eqref{Thrm: 2.1}.
\end{proof}

\begin{remark}
Theorem \eqref{Thrm: 3.3} generalizes Theorem \eqref{Thrm: 3.2} of Nashine \cite{Nashine2007}. Hence, related results due to Brosowski \cite{Brosowski1969}, Hicks and Humphries \cite{HicksHumphries1989}, Jungck and Sessa \cite{JungckSessa1995}, Sahab, Khan and Sessa \cite{SahabKhanSessa1988} and Singh \cite{Singh1979} are generalized and improved.
\end{remark}

\section*{Conclusion}
In this paper, we have established common fixed point theorems for $C_q$-commuting self-mappings satisfying a generalized Gregus-type inequality involving quadratic terms in $q$-starshaped domains (Theorems 2.1 and 2.3). Key advancements include:
\begin{enumerate}[label=(\roman*)]
    \item The use of set-distance constraints $\delta(\cdot, [q, \cdot])$ to generalize classical norm-based inequalities,
    \item Relaxation of affinity requirements via $q$-affineness and $C_q$-commutativity,
    \item Integration of reciprocal continuity and weak continuity in Banach spaces.
\end{enumerate}
Applications to invariant approximation theory (Theorems 3.2 and 3.3) demonstrate the utility of our results in best approximation problems. Example 2.6 explicitly verifies that our theorems strictly generalize Nashine's work \cite{Nashine2007}.

\end{document}